\newtheorem{theorem}{Theorem}[section]
\newtheorem{proposition}{Proposition}[section]
\newtheorem{lemma}{Lemma}[section]
\newtheorem{corollary}{Corollary}[section]
\newtheorem{remark}{Remark}[section]
 \numberwithin{equation}{section}
  \numberwithin{figure}{section}
\definecolor{Blue}{rgb}{0.3,0.3,0.9}
\newtheorem*{Ack}{Acknowledgment}
\begin{document}

\normalsize

\title{Additive estimates of the permanent using Gaussian fields}

\maketitle

\vskip 6pt
\author{ Tantrik Mukerji and  Wei-Shih Yang}
\vskip 3pt
\indent 
 \indent        
 \indent  
  \indent 
   \indent     

\vskip 6pt
Email: tuj79252@temple.edu, yang@temple.edu \\

\vskip 12pt

KEY WORDS: Permanent, Gaussian, semidefinite program, cut-norms, Max-Cut

\vskip12pt

\begin{abstract}
We present a randomized algorithm for estimating the permanent of an $M \times M$ real matrix $A$ up to an additive error. We do this by viewing the permanent  $\mathrm{perm}(A)$ of $A$ as the expectation of a product of centered joint Gaussian random variables with a particular covariance matrix $C$. 
The algorithm outputs the empirical mean $S_{N}$ of this product after sampling $N$ times. Our algorithm runs in total time $O(M^{3} + M^{2}N + MN)$ with failure probability 
\begin{equation*}
    P(|S_{N}-\text{perm}(A)| > t) \leq \frac{3^{M}}{t^{2}N} \prod^{2M}_{i=1} C_{ii}.
\end{equation*}
In particular, we can estimate $\mathrm{perm}(A)$ to an additive error of $\epsilon\bigg(\sqrt{3^{2M}\prod^{2M}_{i=1} C_{ii}}\bigg)$ in polynomial time. We compare to a previous procedure due to Gurvits.  We discuss how to find a particular $C$ using a semidefinite program and a relation to the Max-Cut problem and cut-norms.
\end{abstract}

\newpage

\newpage

\section{Introduction}
\setcounter{equation}{0}

The permanent of an  $M \times M$ matrix $A = (A_{ij}) $ is defined as
\begin{equation}
    \mathrm{perm}(A) = \sum_{\sigma} \prod_{i} A_{i \sigma(i)},
\end{equation}
\newline
where the sum is over all permutations $\sigma$ of $\{1, \ldots, M \}$.   The computation of the permanent often occurs in the context of counting. An important example is when the entries of $A$ are elements of the set $\{0,1\}$. In this case, we can associate $A$ to a bipartite graph and  $\text{perm}(A)$ then counts the number of perfect matchings of this bipartite graph. The number of perfect matchings of a graph has applications in a variety of contexts related to combinatorial optimization, with applications to flow networks, scheduling, modeling bonds in chemistry, etc. 
\newline \indent Despite having a similar definition to the determinant, the computation of $\text{perm}(A)$ is believed to be difficult. Formally, it was shown by Valiant in \cite{V} that computing the permanent of a matrix with entries in $\{0,1\}$ is $\#{P}$-complete, the representative class of the hardest problems in the class $\#{P}$. Recently, the difficulty of computing the permanent was used in the advent of boson sampling by Aaronson-Arkhipov in \cite{AA}. There the authors introduce a model of computing based on linear optics. The sampling task involved with this model is believed to be classically intractible since the probability distribution from which the model samples from is related to the permanent of a complex matrix. Furthermore, the framework of linear optics has been used by Aaronson in \cite{A} to provide another proof showing that computing $\mathrm{perm}(A)$ is $\#{P}$-hard and furthermore showed that even computing the sign of $\mathrm{perm}(A)$ is $\#{P}$-hard. With this, recent work has been focused on obtaining an approximation for $\mathrm{perm}(A)$ especially for particular classes of matrices where the problem may be more tractible.
\newline \indent 
For obtaining a multiplicative (relative) approximation of a matrix $A$ with non-negative entries, it was seminally shown by Jerrum, Sinclair, and Vigoda in \cite{JSV} that there exists a randomized algorithm that outputs $Z$ such that $$P[\exp(-\epsilon)Z \leq \text{perm}(A) \leq \exp(\epsilon)Z] \geq \frac{3}{4}$$ in time polynomial in $M$ and $1/\epsilon$ with a running time of $\tilde{O}(M^{10})$ using warm-starts. For positive semidefinite matrix $A$, the earliest multiplicative approximation result is due to Marcus \cite{Ma} which provides an approximation within a factor of $M!$ using the diagonal entries of the matrices.  Recent works \cite{AG} and \cite{YP}, provide a polynomial time deterministic approximation within an exponential factor $c^{M}$ for $c = e^{1+\gamma}$, with $\gamma$ being Euler's constant. It was recently shown by Meiburg in \cite{M}, that approximating the permanent of a positive semidefinite matrix within an exponential factor $2^{M^{1-\epsilon}}$ is NP-hard for any $\epsilon > 0$.
\newline \indent
For obtaining an additive estimate of $\mathrm{perm}(A)$. Gurvits showed in \cite{G} the existence of randomized polynomial time algorithm that estimates $\mathrm{perm}(A)$, for any matrix $A$, within an additive error given in terms of $\|A\|$. Here $\|A\|$ is the operator norm of $A$. In particular, Gurvits showed the following whose presentation we take from \cite{AH}.
\begin{theorem} \label{Gurvits}
There is a randomized algorithm such that for any $M \times M$ complex matrix $A$ returns an empirical sample $S_{N}$ on $N$ samples from a uniformly distributed bit string $\{-1, 1\}$, which can be computed in $O(M^{2}N)$ time, whose additive estimate for $\text{perm}(A)$ has the upper bound on the failure probability
$$P[|S_{N} - Per(A)| > t] \leq 2\exp(-Nt^{2}/2\|A \|^{2M}).$$
In particular, if we take $\frac{K}{\epsilon^{2}}$ samples then this algorithm in time $O(M^{2}/\epsilon^{2})$, obtains an estimate of $\mathrm{perm}(A)$ within an additive error of $\epsilon\|A\|^{M}$ with probability at least $1-  2\exp(-K/2).$
\end{theorem}
The work does this by viewing $\mathrm{perm}(A)$ as the expectation of a bounded random variable on uniformly distributed bit-strings $\{-1,1\}^{M}$. The algorithm then proceeds by outputting the empirical mean $S_{N}$ of $N$ samples of this random variable which can be computed at a cost of $O(M^{2})$ per sample and taking $N$ to be $O(\frac{1}{\epsilon^{2}})$. Hoeffding’s inequality implies that the upper bound on the failure probability. 
\newline \indent 
Another additive estimate of $\text{perm}(A)$ for the case when $A$ is Hermitian positive semidefinite was provided by the work of Chakhmakhchyan, Cerf, and Garcia-Patron in \cite{CCGP}. There they exploited a physical interpretation of $\text{perm}(A)$ within linear optics when the input is a thermal state. There $A$ can be associated to a unitary transformation on the state space and $\text{perm}(A)$ is proportional to the probability of detecting a single photon in each mode at the output. The authors use this to show that $\text{perm}(A)$ is proportional to the expectation of a bounded random variable over a complex Gaussian distribution. The algorithm then approximates $\text{perm}(A)$ by taking the empirical mean of this random variable which also takes $O(M^{2})$ time to compute for each of the samples. This procedure involves diagonalizing $A$ which comes at an additional computational cost of $O(M^{3})$ but improves on the error of Gurvits’s algorithm if the eigenvalues of $A$ satisfy a certain smoothness property. This algorithm matches the computation time of our algorithm below. 
\newline \indent In this work, we construct another randomized polynomial time additive estimate of the permanent $\text{perm}(A)$ for any $M \times M$ real matrix $A$ using samples from a Gaussian distribution.
\newline \indent
When $A$ is symmetric positive definite, one can associate to $A$, the Gaussian random variables $X_{1}, \ldots, X_{M}$ whose covariance is given by $A$. The random variables $X_{1}, \ldots, X_{M}$ are referred to as the \textit{Gaussian field} associated to $A$. These random variables span a Hilbert space, whose associated Fock space has a multiplication structure known as the $\textit{Wick product}$ (see \cite{J}). The relationship of the Wick product to the permanent of $A$ is given by the following theorem that has appeared implicitly in literature (see chapter 3 of \cite{J}).

\begin{theorem} \label{theorem1.2}
Let $X_{1}, \ldots, X_{M}$ be jointly Gaussian with real positive-definite symmetric covariance matrix given by $A_{ij} = \langle X_{i}X_{j} \rangle$ and let $:X_{1}\ldots{X_{M}}:$ denote the \textit{Wick product} of  the $X_{i}$, we then have that 
$$ \langle :X_{1}\ldots{X_{M}}:^{2} \rangle =\mathrm{perm}(A).$$
Here $\langle \cdot \rangle$ denotes expectation.
\end{theorem}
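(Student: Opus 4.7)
The natural approach is to use the exponential (generating-function) characterization of Wick products, which converts the identity into a formal power-series manipulation. First I would recall that for centered jointly Gaussian $X_1,\dots,X_M$ with covariance $A_{ij}=\langle X_iX_j\rangle$, the Wick monomials are defined by
$$\exp\!\Big(\sum_i s_i X_i -\tfrac12\sum_{i,j} s_is_j A_{ij}\Big)=\sum_{k_1,\dots,k_M\geq 0}\frac{s_1^{k_1}\cdots s_M^{k_M}}{k_1!\cdots k_M!}\,{:}X_1^{k_1}\cdots X_M^{k_M}{:}$$
as an equality of formal power series in the $s_i$. In particular, the coefficient of $s_1\cdots s_M$ on the right-hand side is exactly ${:}X_1\cdots X_M{:}$.

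Next I would introduce a second set of indeterminates $t=(t_i)$ and compute $\langle F(s)F(t)\rangle$, where $F(s)$ denotes the exponential above. Since $\sum_i(s_i+t_i)X_i$ is centered Gaussian, its moment generating function is
$$\Big\langle \exp\!\Big(\sum_i(s_i+t_i)X_i\Big)\Big\rangle=\exp\!\Big(\tfrac12\sum_{i,j}(s_i+t_i)(s_j+t_j)A_{ij}\Big).$$
The quadratic correction terms in $F(s)$ and $F(t)$ precisely cancel the $ss$ and $tt$ contributions of this exponential, yielding the compact identity
$$\langle F(s)F(t)\rangle=\exp\!\Big(\sum_{i,j}s_it_jA_{ij}\Big).$$

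Third, I would extract the coefficient of $s_1\cdots s_M\,t_1\cdots t_M$ on both sides. On the left, orthogonality of Wick monomials of different multi-degrees isolates exactly $\langle{:}X_1\cdots X_M{:}\,{:}X_1\cdots X_M{:}\rangle$. On the right, expanding $\exp(\sum_{i,j}s_it_jA_{ij})=\sum_{k\geq 0}\tfrac{1}{k!}(\sum_{i,j}s_it_jA_{ij})^k$ and demanding one copy of each $s_i$ and each $t_j$ forces $k=M$; the coefficient is then $\frac{1}{M!}\cdot M!\sum_{\sigma\in S_M}\prod_i A_{i\sigma(i)}=\mathrm{perm}(A)$, where the factor $M!$ arises from the orderings of the product terms in the expansion.

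The main obstacle I anticipate is justifying the interchange of expectation and infinite summation, and checking that the formal power-series identity lifts to a genuine identity of $L^2$ random variables at each multi-degree. This is routine since $A$ is positive definite (so $F(s)$ has finite moments of all orders and the series converges in $L^2$ on any bounded neighborhood of $s=0$), but it deserves a careful remark. Once this is in place, steps one through three reduce the theorem to a direct bijective count matching the definition of the permanent.
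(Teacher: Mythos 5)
Your proposal is correct and follows essentially the same route as the paper: the paper proves its general Proposition 2.2 by exactly this generating-function manipulation (multiplying Wick exponentials, using $\langle\, {:}e^{Y}{:}\,\rangle=1$ so that only the cross-covariance exponential survives, then extracting the coefficient of the multilinear monomial), and Theorem 2.1/Corollary 2.1 are the two-factor specialization you carry out directly. Your coefficient count of $\exp\bigl(\sum_{i,j}s_it_jA_{ij}\bigr)$ yielding $\mathrm{perm}(A)$ matches the paper's pairing/Feynman-diagram argument restricted to two vertices.
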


In order to make our treatment self-contained, we will prove this theorem in the sequel using the combinatorial framework provided by Dynkin in \cite{D}. 
\newline \indent We then use the techniques of the proof of Theorem 1.1 to relate $\text{perm}(A)$, for any $M \times M$ real matrix  $A$, to the expectation of a product of a  Gaussian field $X_{1}, \ldots, X_{M'}$ whose covariance matrix contains $A$ as a submatrix. We refer to the covariance matrix $C$ of $X_{1}, \ldots, X_{M'}$ as a $\textit{Gaussian embedding}$ of $A$. We denote by $\mathcal{N}(0, C)$ to be the centered Gaussian distribution with covariance matrix $C$.  We provide a proof of the result that follows.

\begin{theorem}  \label{theorem1.3}
Let $A$ be an arbitrary $M \times M$ real matrix. There exists a $2M \times 2M$ covariance matrix $C$, such that the associated Gaussian field $X_{1}, \ldots, X_{2M}$ has a product that satisfies
\begin{equation}
    \mathrm{perm}(A) = \big\langle \prod^{2M}_{j=1} X_{j} \big\rangle.
\end{equation}
Here $\langle \cdot \rangle$ denotes expectation.
\end{theorem}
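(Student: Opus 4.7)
My strategy is to construct $C$ explicitly as a $2M \times 2M$ block matrix, verify it is a legitimate (symmetric positive semi-definite) covariance, and then compute $\langle \prod_{j=1}^{2M} X_j \rangle$ via Isserlis' (Wick's) theorem, showing that all terms except those indexed by permutations of $\{1,\ldots,M\}$ vanish.

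Concretely, I would take
\[
C = \begin{pmatrix} \alpha I_M & A \\ A^T & \alpha I_M \end{pmatrix}, \qquad \alpha \geq \|A\|,
\]
which is symmetric by construction. To establish $C \succeq 0$, I would apply the Schur complement criterion: since $\alpha I_M \succ 0$, the block matrix $C$ is PSD iff $\alpha I_M - A^T(\alpha I_M)^{-1} A = \alpha I_M - \alpha^{-1} A^T A \succeq 0$, equivalently $\alpha^2 \geq \|A\|^2$, which is exactly our hypothesis. Thus $X_1, \ldots, X_{2M}$ with covariance $C$ is a bona fide centered Gaussian field, the \emph{Gaussian embedding} of $A$.

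Next, Isserlis' theorem gives
\[
\Big\langle \prod_{j=1}^{2M} X_j \Big\rangle = \sum_{\pi} \prod_{\{i,j\} \in \pi} C_{ij},
\]
where the sum runs over all perfect matchings $\pi$ of $\{1,\ldots,2M\}$. Partitioning the index set into $G_1 = \{1,\ldots,M\}$ and $G_2 = \{M+1,\ldots,2M\}$, any pair in $\pi$ with both endpoints inside a single $G_k$ picks out an off-diagonal entry of $\alpha I_M$, which is zero. Consequently, only matchings that pair $G_1$ bijectively with $G_2$ survive; these are in bijection with permutations $\sigma$ of $\{1,\ldots,M\}$, and each contributes $\prod_i C_{i,\,M+\sigma(i)} = \prod_i A_{i\sigma(i)}$. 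Summing over $\sigma$ recovers $\text{Perm}(A)$.

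The only genuine idea is doubling the number of variables so that the matching structure of Isserlis' identity reproduces the permutation sum defining the permanent, with the within-block zero pattern enforcing the bijectivity constraint. Verification of PSD-ness via Schur is routine and the matching-by-matching evaluation is mechanical; the main conceptual obstacle is recognizing that an arbitrary (possibly non-symmetric, non-PSD) $A$ can always be placed in the off-diagonal block of a symmetric PSD matrix of twice the size, which is precisely what sidesteps the positive-definiteness hypothesis of Theorem 1.1 and allows one to work with an honest product $\prod_j X_j$ rather than a Wick-ordered product.
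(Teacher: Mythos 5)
Your proposal is correct and follows essentially the same route as the paper: the same embedding $C = \begin{pmatrix} \alpha I & A \\ A^T & \alpha I \end{pmatrix}$ with $\alpha \geq \|A\|$, followed by the observation that the vanishing off-diagonal entries within each diagonal block kill all non-crossing pairings, leaving exactly the permutation sum. The only (immaterial) differences are that you certify positive semi-definiteness via the Schur complement rather than the paper's spectral-radius computation for the block matrix $B$, and you invoke Isserlis' theorem directly rather than passing through Wick products and Feynman diagrams, which reduces to the same pairing sum since each factor $X_j$ appears singly.
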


We now present our main contributions starting with the following result.

\begin{theorem} \label{theorem1.4}
Let $A$ be an $M \times M$ real matrix and let $C$ be a $2M \times 2M$ positive semidefinite symmetric matrix with off-diagonal entries, $i \neq j$, given by $$
C_{ij} =
\begin{cases}
A_{ij} & 1\leq i \leq M, M+1 \leq j \leq 2M \\
0&\text{otherwise}
\end{cases}$$ and diagonal entries given by $C_{ii} = \alpha$ for any $ \alpha \geq \|A\|$. 
Then given $N$ samples from $\mathcal{N}(0,C)$, there is a randomized algorithm that returns an empirical sample $S_{N}$  which can be computed in time $O(MN)$, whose additive estimate for $\text{perm}(A)$ has the failure probability:
\begin{equation}
    P(|S_{N}-\mathrm{perm}(A)| > t) \leq \frac{3^{M}}{t^{2}N}{\alpha}^{2M}
\end{equation}
 
In particular, if we are given $\frac{K}{\epsilon^{2}}$ samples then this algorithm in time $O(M/\epsilon^{2})$ obtains an estimate of $\mathrm{perm}(A)$ within an additive error of $\epsilon(\sqrt{3}\|A\|)^{M}$ with probability at least $1- K^{-1}.$

\end{theorem}

 \indent 
This result can be compared to theorem \ref{Gurvits} but can improved since there is a degree of flexibility to our approach. Namely, the upper bound on the failure probability in theorem \ref{theorem1.4} is provided by using independence within the Gaussian field. This argument can be generalized to provide a stronger upper bound on the failure probability. In particular, the proof of theorem \ref{theorem1.4} allows us to state the upper bound of the failure probability in terms of the diagonal entries of the covariance matrix $C$. We give this restatement as follows.
\begin{theorem} \label{theorem1.5}
   Let $A$ be an $M \times M$ real matrix and let $C$ be a $2M \times 2M$ positive semidefinite symmetric matrix with off-diagonal entries, $i \neq j$, given by $$
C_{ij} =
\begin{cases}
A_{ij} & 1\leq i \leq M, M+1 \leq j \leq 2M \\
0&\text{otherwise}
\end{cases}.$$
Then given $N$ samples from $\mathcal{N}(0,C)$, there is a randomized algorithm that returns an empirical sample $S_{N}$  which can be computed in time $O(MN)$, whose additive estimate for $\text{perm}(A)$ has the failure probability:
\begin{equation}
    P(|S_{N}-\mathrm{perm}(A)| > t) \leq \frac{3^{M}}{t^{2}N} \prod^{2M}_{i=1} C_{ii}.
\end{equation}
In particular, if we are given $\frac{K}{\epsilon^{2}}$ samples then this algorithm in time $O(M/\epsilon^{2})$ obtains an estimate of $\mathrm{perm}(A)$ within an additive error of $\epsilon\bigg(\sqrt{3^{2M}\prod^{2M}_{i=1} C_{ii}}\bigg)$ with probability at least $1- K^{-1}.$
\end{theorem}
This restatement allows us to vary the diagonal entries of $C$ to improve the upper bound on the failure probability. We consider discuss finding such a $C$ through the use of a semidefinite program discussed below. In the case, when $A_{ij} \geq 0$ for all $i,j$, we consider the case where $C$ is a diagonally dominant matrix and show it improve over the covariance matrix used in theorem \ref{theorem1.4} and furthermore show it to be the optimal solution for the semidefinite program we consider. We also show a relation between the semidefinite program we consider and the semidefinite program that appeared in \cite{GW} for the Max-Cut problem for arbitrary graphs with non-negative weights.  In the general case, when the entries of $A$ are potentially negative, we show a connection between the semidefinite program to the Max-Cut problem of real weighted bipartite graphs and the cut norms that appeared in the work of Alon and Naor \cite{AN}. We are able to bound the failure probability of our procedure in terms of these norms in corollary \ref{lastcor1} below. 
\newline \indent
In comparison to theorem \ref{Gurvits}, we sample from $\mathcal{N}(0, C)$ as opposed to $\{-1, 1\}^{M}$.  To obtain $N$ samples, after an initial cost of $O(M^{3})$ to perform a Cholesky decomposition on $C$, obtains the samples in $O(M^{2}N)$ time and computes the estimate in time $O(MN)$.   In addition, the total computation time of our procedure may include the cost of finding $C$ and obtaining $N$ samples from $\mathcal{N}(0,C)$ which we comment on below for various cases. In addition, the bounds for the failure probability of our procedure as stated in theorem \ref{theorem1.4} is exponentially weaker than the procedure of theorem \ref{Gurvits} since our procedure estimates estimates $\mathrm{perm}(A)$, for any real matrix $A$, within an additive error of $\epsilon(\sqrt{3}\|A\|)^{M}$ rather than $\|A\|^{M}$. However, the improvement provided by theorem \ref{theorem1.5} allows us to find classes of matrices where our procedure potentially improves upon the accuracy of the procedure provided by theorem \ref{Gurvits}. In particular, when  $3^{2M}\prod^{2M}_{i=1} C_{ii} \leq \|A\|^{2M}$ then our procedure has a stronger accuracy guarantee then the one provided by theorem \ref{Gurvits}. 
\newline \indent 
\indent We organize this work by first providing a background section that contains the combinatorial framework for expectations of Wick products described in \cite{D} which we then use to provide a proof of theorem \ref{theorem1.2}. We then define what it means to be a Gaussian embedding and prove theorem \ref{theorem1.3}. We can then provide the algorithm in theorem \ref{theorem1.4} and its subsequent analysis. We then consider the relaxation of our procedure provided by theorem \ref{theorem1.5} and optimize over all possible choices of diagonal matrices via a semidefinite program. We then discuss the case when $A$ has non-negative entries and the relationship between our program and the Max-Cut semidefinite program in \cite{GW}. We then discuss a connection of our program to the Max-Cut problem on real weighted bipartite graphs and the cut norms on $A$ discussed in \cite{AN}. We hope that this work opens the door to further connections between Gaussian fields and the permanent.

\section{Background}  \indent Our treatment here follows Dynkin in \cite{D}. Let $(\Omega, \mathcal{F}, P)$ be a probability space and consider the Hilbert space $L^{2}(\Omega, \mathcal{F}, P)$. Given a random variable $X: \Omega \rightarrow \mathbb{R}$, we denote its expectation as $\langle X \rangle$. Furthermore, we say a random variable is \textit{centered} if $\langle X \rangle  = 0$. A finite collection of centered random variables $X_{1}, \ldots, X_{n}$ is said to be \textit{jointly Gaussian} if for any $t_{i} \in \mathbb{R}$, we have that $\langle \exp(t_{1}X_{1} +  \cdots t_{n}X_{n}) \rangle = \exp( \frac{1}{2}\sum_{i,j} A_{ij}t_{i}t_{j})$ where $A_{ij} = \langle X_{i} X_{j} \rangle$ is the covariance between $X_{i}$ and $X_{j}$. We refer to $X_{1}, \ldots, X_{n}$ as the centered \textit{Gaussian field} with covariance matrix $A$ and refer to the linear span $H$ of $X_{1}, \ldots, X_{n}$ as the associated $\textit{Gaussian space}$. 
 \newline \indent
 For the expectation of a product $X_{i_{1}}\ldots{X_{i_{p}}}$, we have Wick's theorem or Isserlis' theorem (see Theorem 1.28 in \cite{J}).

 We now define the Wick product on a Gaussian space $H$ spanned by a finite number of centered Gaussian random variables.  Given $Y \in H$, define the \textit{Wick product} $: \cdot :$ on the exponential of $Y$ as 
 \begin{equation} \label{eq2.2}
     :e^{Y}: = e^{{Y} - \frac{\langle Y^{2}\rangle}{2} }
 \end{equation} 
 We also define the Wick product  on products of the form $X^{k_{1}}_{1}\ldots{X^{k_{n}}_{n}}$ via generating functions as
  \begin{equation} \label{Wickprod1}
     \sum^{\infty}_{k_{1}, \ldots, k_{n}} \frac{t_{1}^{k_{1}}\cdots{t_{n}^{k_{n}}}}{k_{1}!\cdots{k_{n}!}}:X^{k_{1}}_{1}\cdots{X^{k_{n}}_{n}}: = :e^{\sum^{n}_{i=1} t_{i}X_{i}}:
 \end{equation} 
 where \eqref{eq2.2} applies to the right hand side which simplifies to
   \begin{equation} \label{Wickprod2}
     \sum^{\infty}_{k_{1}, \ldots, k_{n}} \frac{t_{1}^{k_{1}}\cdots{t_{n}^{k_{n}}}}{k_{1}!\cdots{k_{n}!}}:X^{k_{1}}_{1}\cdots{X^{k_{n}}_{n}}: {}= \exp({\sum^{n}_{i=1}t_{i}X_{i} -\frac{1}{2} \sum^{n}_{i,j}  A_{ij}t_{i}t_{j}})
    \end{equation} 
where $A_{ij} = \langle X_{i}X_{j} \rangle$.
\newline \indent
 It then follows from Taylor's theorem that $:X^{k_{1}}_{1}\cdots{X^{k_{n}}_{n}}:$ is the Taylor coefficient of $t_{1}^{k_{1}}\cdots{t_{n}^{k_{n}}}$. That is
 \begin{equation} \label{eq2.5}
     :X^{k_{1}}_{1}\cdots{X^{k_{n}}_{n}}: {} = {} \frac{\partial^{k_{1} + \cdots + k_{n}}}{\partial{t}^{k_{n}}_{n}\ldots\partial{t_{1}^{k_{1}}}} \exp( \sum^{n}_{i=1}t_{i}X_{i} -\frac{1}{2} \sum^{n}_{i,j=1} A_{ij}t_{i}t_{j}) \bigg \vert_{t_{i}=0}.
 \end{equation}
It is well known that the $:X^{k_{1}}_{1}\cdots{X^{k_{n}}_{n}}:$ form a basis for $L^{2}(\Omega, \mathcal{F}(H), P)$ where $\mathcal{F}(H)$ denotes the $\sigma$-algebra generated by $H$, (see for example, Theorem 2.6 (pg 18) of \cite{J}) which allows us to define the Wick product on $L^{2}(\Omega, \mathcal{F}(H), P)$.
\newline \indent
Let $S$ be a finite subset of $H$ and let $\mathcal{P} = \{S_{1}, \ldots, S_{r} \}$ be a partition of $S$ where each element $S_{i}$ of the partition can be labeled as $S_{i} = \{X_{i1}, \ldots, X_{i{n_{i}}} \}$. Here the subscript $ij$ indicates the variable being the $j$th element of $S_{i}$ i.e. $X_{ij}  \in S_{i}$. This notation will be useful for grouping products of Wick products, for which we have the following proposition that appears as equation 2.11 in \cite{D}.
\begin{proposition} \label{prop2.2} 

Let $X_{ij}$ be centered Gaussian random variables for $i=1,\ldots, r$ and  $j = 1, \ldots, n_{i}$. Then the following holds
\begin{equation}
    \bigg\langle \prod^{r}_{i=1} : \prod^{n_{i}}_{j=1} X_{ij}:  \bigg\rangle  =   \sum \prod \langle X_{i_{\alpha}j_{\alpha}}X_{i_{\beta}j_{\beta}} \rangle 
\end{equation}
where the sum is taken over all partitions of $\{(i,j)\}_{1 \leq i \leq r, 1 \leq j \leq n_{i}}$ into pairs $((i_{\alpha}, j_{\alpha}), (i_{\beta},j_{\beta})) $ where $i_{\alpha} \neq i_{\beta}$. When no such partition exists, then we have that the right-hand side is zero.
\end{proposition}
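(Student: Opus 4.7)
The plan is to prove Proposition 2.2 via the generating-function characterization of the Wick product. Introduce formal parameters $t_{ij}$, one for each $X_{ij}$, and set $Y_i := \sum_{j=1}^{n_i} t_{ij}X_{ij}$, which lies in $H$. By definition (2.3), $:e^{Y_i}: = e^{Y_i - \frac{1}{2}\langle Y_i^2\rangle}$, so
$$\prod_{i=1}^{r} :e^{Y_i}: \;=\; \exp\!\Big(\sum_{i=1}^{r} Y_i - \tfrac{1}{2}\sum_{i=1}^{r}\langle Y_i^2\rangle\Big).$$
Since $\sum_i Y_i$ is a centered Gaussian, the Laplace-transform identity gives $\langle e^{\sum_i Y_i}\rangle = \exp\!\big(\tfrac{1}{2}\langle(\sum_i Y_i)^2\rangle\big)$. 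Expanding $\langle(\sum_i Y_i)^2\rangle = \sum_i\langle Y_i^2\rangle + 2\sum_{i<i'}\langle Y_i Y_{i'}\rangle$, the diagonal terms cancel against those introduced by the Wick normalization, leaving the compact identity
$$\Big\langle\prod_{i=1}^{r}:e^{Y_i}:\Big\rangle \;=\; \exp\!\Big(\sum_{i<i'}\langle Y_i Y_{i'}\rangle\Big) \;=\; \exp\!\Big(\sum_{i<i'}\sum_{j,j'} t_{ij}t_{i'j'}\langle X_{ij}X_{i'j'}\rangle\Big).$$

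Next I would differentiate both sides in $t_{i1},\dots,t_{in_i}$ once for every $i$ and then set all $t_{ij}=0$. On the left, using (2.5) together with the multilinearity of the Wick product in its arguments, the differentiation pulls $\prod_{i}:\prod_{j}X_{ij}:$ out of the expectation, producing exactly the left-hand side of the claim. Thus it suffices to compute the corresponding mixed partial derivative of the right-hand side at $t=0$, which by Taylor's theorem equals the coefficient of $\prod_{i,j}t_{ij}$ multiplied by $\prod_i n_i!$ — but this same factor $\prod_i n_i!$ is produced on the other side by differentiating the distinct variables one at a time.

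The final step is to extract the coefficient from the expanded exponential
$$\sum_{m\ge 0}\frac{1}{m!}\Big(\sum_{i<i'}\sum_{j,j'} t_{ij}t_{i'j'}\langle X_{ij}X_{i'j'}\rangle\Big)^{m}.$$
A monomial in which each $t_{ij}$ appears to the first power forces $2m = \sum_i n_i$; if this total is odd, no such term exists, matching the statement that the right-hand side of (2.6) is zero. Otherwise each contributing term corresponds to a choice of $m$ ordered pairs whose underlying pair set partitions $\{(i,j)\}_{i,j}$, and the $1/m!$ is absorbed by the $m!$ orderings. Crucially, the constraint $i<i'$ in the sum (equivalently $i\neq i'$ once one passes to unordered pairs) is precisely the condition $i_\alpha\neq i_\beta$ in the proposition, and it appears automatically because the Wick normalization removed the "intra-block" contractions $\langle Y_i^2\rangle$.

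The main obstacle is bookkeeping rather than conceptual: one must match the derivative-and-set-to-zero operation on the left with the multinomial expansion on the right, confirming that the combinatorial factors $m!$ and $\prod_i n_i!$ balance and that the restriction to cross-block pairs survives the reindexing from the ordered sum $\sum_{i<i'}$ to the unordered pair-partition sum in the statement. Once this is verified, the formula (2.6) follows directly.
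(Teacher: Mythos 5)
Your argument is correct and follows essentially the same route as the paper: both reduce the claim to the identity $\langle\prod_{i}:e^{Y_i}:\rangle=\exp\big(\sum_{i<i'}\langle Y_iY_{i'}\rangle\big)$ with $Y_i=\sum_j t_{ij}X_{ij}$ (the paper gets there via $\langle :e^{Y}:\rangle=1$ and its coefficient-extraction operator $\kappa$, you via the Gaussian moment generating function — the same fact), and then both extract the coefficient of $\prod_{i,j}t_{ij}$, with the $1/m!$ cancelling against the $m!$ orderings of the pairs. One cosmetic slip: since each $t_{ij}$ is a distinct variable appearing to the first power, the mixed first-order partial at $t=0$ is exactly the coefficient of $\prod_{i,j}t_{ij}$ with no factor $\prod_i n_i!$ on either side, so your claimed cancellation is vacuous and harmless.
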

As consequence of proposition \ref{prop2.2}, one can obtain Wick's theorem.

\begin{corollary}
    \label{Wickthm}
 For the centered Gaussian random variables $X_{i_{1}}, \ldots, X_{i_{p}}$ where $i_{k} \in \{1, \ldots, m \}$, we have that
\begin{equation} \label{eq2.1}
      \langle X_{i_{1}}\ldots{X_{i_{p}}} \rangle  = \begin{cases} 0 &\text{if $p$ is odd} 
  \\
  \sum_{\text{pairs}} \langle X_{i_{\alpha_{1}}}X_{i_{\beta{1}}} \rangle \cdots \langle X_{i_{\alpha_{k}}}X_{i_{\beta{k}}} \rangle 
  &\text{if $p =2k$}
  \end{cases}
\end{equation}
  where the sum is taken over all partitions of $\{1, \ldots, 2k \}$ into unordered pairs $(\alpha_{i}, \beta_{i})$. 
\end{corollary}

We present a proof of  proposition following the outline in \cite{D}. By \eqref{eq2.5},
we have that 
\begin{equation}
    : \prod^{n_{i}}_{j=1} X_{ij}: = \frac{\partial^{n_{i}}}{\partial{t}_{i,n_{i}}\ldots\partial{t}_{i,1}} \exp( \sum^{n_{i}}_{j=1}t_{ij}X_{ij} -\frac{1}{2} \sum^{n_{i}}_{j,k=1} \langle X_{ij}X_{ik} \rangle t_{ij}t_{ik}) \bigg \vert_{t_{i}=0}.
\end{equation}

We have in particular, that 
\begin{equation} \label{eq2.8}
    : \prod^{n_{i}}_{j=1} X_{ij}: = \kappa(\exp( \sum^{n_{i}}_{j=1}t_{ij}X_{ij} -\frac{1}{2} \sum^{n_{i}}_{j,k=1} \langle X_{ij}X_{ik} \rangle t_{ij}t_{ik}))
\end{equation}
where $\kappa(f(t_{1}, \ldots, t_{n}))$ is the coefficient of $t_{1}\ldots{t_{n}}$ in the Taylor series expansion of $f(t_{1}, \ldots, t_{n})$. This brings us to the following lemma needed in the proof of Proposition \ref{prop2.2}.
\begin{lemma} \label{lem2.1}
With the notation the same as Proposition \ref{prop2.2}, we have that 
   $$   \prod^{r}_{i=1} \kappa(\exp( \sum^{n_{i}}_{j=1}t_{ij}X_{ij} -\frac{1}{2} \sum^{n_{i}}_{j,k=1} \langle X_{ij}X_{ik} \rangle t_{ij}t_{ik})) = \kappa \bigg(\prod^{r}_{i=1} \exp( \sum^{n_{i}}_{j=1}t_{ij}X_{ij} -\frac{1}{2} \sum^{n_{i}}_{j,k=1} \langle X_{ij}X_{ik} \rangle t_{ij}t_{ik}) \bigg) $$
\end{lemma}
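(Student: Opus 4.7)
The plan is to exploit the fact that the variables appearing in different factors of the product are disjoint: the $i$-th factor
$$F_i(t_{i,1},\ldots,t_{i,n_i}) \;=\; \exp\Bigl(\sum_{j=1}^{n_i} t_{ij}X_{ij} - \tfrac12 \sum_{j,k=1}^{n_i} \langle X_{ij}X_{ik}\rangle t_{ij}t_{ik}\Bigr)$$
depends only on the variables $t_{i,1},\ldots,t_{i,n_i}$, and these variable sets are mutually disjoint across $i=1,\ldots,r$. Under this disjointness, the coefficient of the top multilinear monomial $\prod_{i,j} t_{ij}$ in the product $\prod_{i=1}^{r} F_i$ automatically factors as the product over $i$ of the coefficients of $t_{i,1}\cdots t_{i,n_i}$ in $F_i$; this is exactly what the lemma asserts.

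First I would write each $F_i$ as its formal Taylor series $F_i = \sum_{\alpha^{(i)}} c^{(i)}_{\alpha^{(i)}}\, t_{i,*}^{\alpha^{(i)}}$, where $\alpha^{(i)} = (\alpha^{(i)}_1,\ldots,\alpha^{(i)}_{n_i})$ is a multi-index in the variables attached to block $i$ and $t_{i,*}^{\alpha^{(i)}} = \prod_j t_{ij}^{\alpha^{(i)}_j}$. Then I would form the product
$$\prod_{i=1}^{r} F_i \;=\; \sum_{\alpha^{(1)},\ldots,\alpha^{(r)}} \Bigl(\prod_{i=1}^{r} c^{(i)}_{\alpha^{(i)}}\Bigr)\, \prod_{i=1}^{r} t_{i,*}^{\alpha^{(i)}},$$
which is legitimate as a formal manipulation and may be recognized as the usual statement that the Cartesian product of index sets parametrizes the monomials of a product of series in disjoint variable groups.

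Next I would extract the coefficient $\kappa$ of the monomial $\prod_{i,j} t_{ij}$ on both sides. On the right-hand side, because the variable groups are disjoint, the monomial $\prod_{i,j} t_{ij}$ appears in the expansion above exactly once, produced by the single choice $\alpha^{(i)} = (1,1,\ldots,1)$ for every $i$; the coefficient is therefore $\prod_{i=1}^{r} c^{(i)}_{(1,\ldots,1)}$. On the left-hand side, each factor $\kappa(F_i)$ is by definition $c^{(i)}_{(1,\ldots,1)}$, the coefficient of $t_{i,1}\cdots t_{i,n_i}$ in $F_i$, so their product matches.

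The argument is essentially a one-line bookkeeping remark about Taylor coefficients of products of power series in disjoint variables, so I do not anticipate a real obstacle; the only thing to be careful about is to make the disjointness explicit so that no cross terms can contribute a copy of $\prod_{i,j} t_{ij}$ other than the diagonal one $\alpha^{(i)}=(1,\ldots,1)$ for all $i$. After noting this, the identity drops out immediately and Proposition 2.2 follows by combining this lemma with equation (2.9) and applying Wick's theorem (Proposition 2.1) to the single exponential produced on the right-hand side.
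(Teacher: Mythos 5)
Your proposal is correct and follows essentially the same route as the paper, whose proof of this lemma is the one-line remark that it ``follows by comparing Taylor coefficients''; you have simply made explicit the key point that the variable groups $\{t_{i,1},\ldots,t_{i,n_i}\}$ are disjoint across $i$, so the coefficient of $\prod_{i,j}t_{ij}$ in the product is the product of the per-block coefficients.
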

\begin{proof}
This follows from \eqref{Wickprod1} and comparing Taylor coefficients.
\end{proof}
We now present a proof of proposition \ref{prop2.2}.
\begin{proof}
From the definition of $\kappa$ \eqref{eq2.8}, Lemma \ref{lem2.1}, and linearity of expectation we have that
\newline
\begin{align*}
         \bigg\langle \prod^{r}_{i=1} : \prod^{n_{i}}_{j=1} X_{ij}:  \bigg\rangle &=  \bigg\langle   \prod^{r}_{i=1} \kappa(\exp( \sum^{n_{i}}_{j=1}t_{ij}X_{ij} -\frac{1}{2} \sum^{n_{i}}_{j,k=1} \langle X_{ij}X_{ik} \rangle t_{ij}t_{ik}) \bigg\rangle \\
     &= \kappa \bigg( \bigg\langle \prod^{r}_{i=1} \exp( \sum^{n_{i}}_{j=1}t_{ij}X_{ij} -\frac{1}{2} \sum^{n_{i}}_{j,k=1} \langle X_{ij}X_{ik} \rangle t_{ij}t_{ik}) \bigg\rangle \bigg).
\end{align*}
We then use \eqref{Wickprod2} to get
$$\kappa \bigg( \bigg\langle \prod^{r}_{i=1} \exp( \sum^{n_{i}}_{j=1}t_{ij}X_{ij} -\frac{1}{2} \sum^{n_{i}}_{j,k=1} \langle X_{ij}X_{ik} \rangle t_{ij}t_{ik}) \bigg\rangle \bigg) = \kappa \bigg( \bigg\langle \prod^{r}_{i=1}:\exp(\sum^{n_{i}}_{j=1}t_{ij}X_{ij}):\bigg\rangle\bigg),$$
we can then write 
$$\bigg\langle \prod^{r}_{i=1}:\exp(\sum^{n_{i}}_{j=1}t_{ij}X_{ij}):\bigg\rangle = \bigg\langle  :\exp(\sum^{r}_{i=1} \sum_{j=1}^{n_{i}}t_{ij}X_{ij}):\exp(\sum_{i < i', j,j'}t_{ij}t_{i'j'} \langle X_{ij}X_{i'j'} \rangle)\bigg\rangle$$
using \eqref{eq2.2}. 
\newline \indent 
Since the term $\exp(\sum_{i < i', j,j'} \langle X_{ij}X_{i'j'} \rangle)$ is not random and $$\bigg\langle  :\exp(\sum^{r}_{i=1} \sum_{j=1}^{n_{i}}t_{ij}X_{ij}): \bigg\rangle = 1,$$  we have that 
$$\kappa \bigg( \bigg\langle \prod^{r}_{i=1} \exp( \sum^{n_{i}}_{j=1}t_{ij}X_{ij} -\frac{1}{2} \sum^{n_{i}}_{j,k=1} \langle X_{ij}X_{ik} \rangle t_{ij}t_{ik}) \bigg\rangle \bigg)  = \kappa(\exp(\sum_{i < i', j,j'} t_{ij}t_{i'j'}\langle X_{ij}X_{i'j'} \rangle)).$$
We then obtain the claim by the definition of $\kappa$ \eqref{eq2.8} and the Faà di Bruno's formula (\cite{C}).

\end{proof}

In the case where the sum of $n_{i}$ are even, the expectation in proposition \ref{prop2.2} can be described graphically by associating to each $:\prod^{n_{i}}_{i=1}X_{i} :$ for $1 \leq i \leq r$, a vertex ${i}$ with $n_{i}$ legs each labeled $(i,j)$ for $1 \leq j \leq n_{i}$. A \textit{complete Feynman diagram} $\gamma$ on the a set of Wick products $\{:\prod^{n_{i}}_{j=1}X_{j} : | 1 \leq  i \leq r\}$ is obtained when each leg $(i,j)$ is paired to a leg  $(i',j')$ from a different vertex $i'$ associated to $:\prod^{n_{i}}_{j'=1}X_{j'} :$ , the associated value $\nu(\gamma)$ of the diagram $\gamma$ is defined to be the product $\nu(\gamma) = \prod \langle X_{i_{\alpha}j_{\alpha}}X_{i_{\beta}j_{\beta}} \rangle$  where the product is taken over each pairing of legs $((i_{\alpha}, j_{\alpha}), (i_{\beta},j_{\beta}))$ from different vertices in the diagram $\gamma$. In particular, it follows that we can restate proposition \ref{prop2.2} as the following proposition that appears as equation (2.12) in \cite{D}.

\begin{proposition} \label{prop2.3}
  Let $X_{ij}$ be centered Gaussian random variables for $i=1,\ldots, r$ and  $j = 1, \ldots, n_{i}$. Then 
\begin{equation} 
\bigg\langle \prod^{r}_{i=1} : \prod^{n_{i}}_{j=1} X_{ij}:  \bigg\rangle  =  \sum_{\gamma \in \Gamma}  \nu(\gamma) 
\end{equation}
where the sum is taken over the set all complete Feynman diagrams $\Gamma$ over the set of Wick products $ :\prod^{n_{i}}_{j=1} X_{j}: $. When $\Gamma$ is empty, then the right hand side is zero.
\end{proposition}

A consequence of this is the following theorem

   \begin{theorem} \label{theorem2.1}
   Let $X_{1j}$ and $X_{2j}$ for $ 1 \leq j \leq n$ be in $H$, then
$$ \langle :X_{11} \ldots X_{1n}::X_{21}\ldots{X_{2n}}: \rangle  = \sum_{ \sigma \in S_{n}} \prod^{n}_{i=1} \langle X_{1i}X_{2\sigma(i)} \rangle$$
   \end{theorem}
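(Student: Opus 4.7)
The plan is to apply Proposition 2.3 directly with $r = 2$ and $n_1 = n_2 = n$, and then to identify the combinatorial index set (complete Feynman diagrams on two vertices of $n$ legs each) with the symmetric group $S_n$.

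First I would set up the Feynman diagram. There are two vertices, labeled $1$ and $2$, each with $n$ legs; vertex $i$ has legs $(i,1), \dots, (i,n)$. The total number of legs is $2n$, which is even, so the set of complete Feynman diagrams $\Gamma$ is nonempty. The defining constraint of a complete Feynman diagram is that each pairing must connect legs coming from \emph{different} vertices. Since there are only two vertices here, this forces every leg $(1,i)$ to be paired with some leg $(2,j)$, and conversely every leg $(2,j)$ to be paired with some leg $(1,i)$. Thus a complete Feynman diagram is precisely a bijection from $\{1,\dots,n\}$ to $\{1,\dots,n\}$, i.e., a permutation $\sigma \in S_n$ determined by the rule ``leg $(1,i)$ is paired with leg $(2,\sigma(i))$.'' Conversely, every $\sigma \in S_n$ clearly produces a valid complete Feynman diagram $\gamma_\sigma$, so the correspondence $\sigma \leftrightarrow \gamma_\sigma$ is a bijection between $S_n$ and $\Gamma$.

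Next I would read off the value of each diagram. By the definition of $\nu(\gamma)$, the value of $\gamma_\sigma$ is the product over its pairings of the corresponding covariances, which gives
\[
\nu(\gamma_\sigma) \;=\; \prod_{i=1}^{n} \langle X_{1i} X_{2\sigma(i)} \rangle.
\]
Substituting into Proposition 2.3 yields
\[
\bigl\langle :X_{11}\cdots X_{1n}::X_{21}\cdots X_{2n}:\bigr\rangle
\;=\; \sum_{\gamma \in \Gamma} \nu(\gamma)
\;=\; \sum_{\sigma \in S_n} \prod_{i=1}^{n} \langle X_{1i} X_{2\sigma(i)}\rangle,
\]
which is exactly the claim.

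There is essentially no analytic obstacle here; the entire content of the proof is the combinatorial observation that ``complete Feynman diagrams on two vertices of $n$ legs each'' is just another name for ``perfect matchings of a complete bipartite graph $K_{n,n}$,'' which are in bijection with $S_n$. The only thing to be careful about is not accidentally counting pairings within a single vertex, but Proposition 2.3 has already excluded those by its definition. Thus the theorem follows immediately from Proposition 2.3 specialized to $r=2$.
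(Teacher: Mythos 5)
Your proof is correct and follows exactly the route the paper intends: the theorem is presented as an immediate consequence of Proposition 2.3 with $r=2$ and $n_1=n_2=n$, where complete Feynman diagrams on two vertices with $n$ legs each are identified with permutations in $S_n$. Your write-up simply makes explicit the bijection $\sigma \leftrightarrow \gamma_\sigma$ that the paper leaves implicit (and reuses later in the proof of Proposition 3.1), so there is nothing to add.
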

   We can then use this result to get an immediate connection to the permanent in the following corollary.
   
   \begin{corollary} \label{permWick}
   Let $X_{1}, \ldots, X_{n}$ be jointly Gaussian with real positive-definite symmetric covariance matrix given by $A_{ij} = \langle X_{i}X_{j} \rangle$, we then have that 
   $$ \langle :X_{1}\ldots{X_{n}}:^{2} \rangle = Perm(A).$$
   
   \end{corollary}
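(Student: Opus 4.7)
The plan is that the Corollary follows directly from the preceding Theorem by a simple specialization, so the proof is almost purely notational. I would set $X_{1j} := X_j$ and $X_{2j} := X_j$ for $1 \leq j \leq n$, which collapses the two Wick-product factors $:X_{11}\cdots X_{1n}:$ and $:X_{21}\cdots X_{2n}:$ into two copies of the single Wick product $:X_1 \cdots X_n:$. The left-hand side of the Theorem then becomes $\langle :X_1 \cdots X_n:^2 \rangle$, while the right-hand side reads
$$\sum_{\sigma \in S_n} \prod_{i=1}^n \langle X_i X_{\sigma(i)} \rangle.$$

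Next, I would substitute the covariance definition $A_{ij} = \langle X_i X_j \rangle$ into each factor of the product, turning the right-hand side into
$$\sum_{\sigma \in S_n} \prod_{i=1}^n A_{i\sigma(i)},$$
which by equation (1.1) of the Introduction is precisely $\mathrm{Perm}(A)$. Combining these two displays yields the claimed identity, and no further estimates are needed.

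The only point that merits a brief sanity check is whether the preceding Theorem, whose statement uses two nominally distinct index families $\{X_{1j}\}$ and $\{X_{2j}\}$, remains valid when the two families are taken to coincide. This is not a genuine obstacle: the Theorem is deduced from the Feynman-diagram form of Proposition 2.3, which only constrains how legs attached to the vertex labeled $1$ are paired with legs attached to the vertex labeled $2$, and imposes no distinctness condition on the underlying random variables themselves. Thus the identification $X_{1j} = X_{2j} = X_j$ simply turns each covariance $\langle X_{1i} X_{2\sigma(i)}\rangle$ into $A_{i\sigma(i)}$. The positive-definiteness hypothesis on $A$ is not used in the identity itself; it enters only through the background need to realize $X_1,\ldots,X_n$ as a centered jointly Gaussian family with the prescribed covariance matrix.
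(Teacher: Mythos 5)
Your proposal is correct and matches the paper's (implicit) argument exactly: the paper presents the corollary as an immediate specialization of Theorem 2.1 obtained by setting $X_{1j}=X_{2j}=X_{j}$, so that $\langle X_{1i}X_{2\sigma(i)}\rangle = A_{i\sigma(i)}$ and the permutation sum becomes $\mathrm{Perm}(A)$. Your added remark that the coincidence of the two index families causes no trouble, since Proposition 2.3 only restricts pairings by vertex label and not by distinctness of the random variables, is a correct and worthwhile clarification.
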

   
\begin{remark}
The connection made to the permanent here has appeared before in the literature. See for instance pg 26 of \cite{J} for a different treatment of Wick products where Theorem 2.1 appears. For a similar treatment of Wick products, see chapter 2 of Simon \cite{S}.
\end{remark}

Corollary \ref{permWick} provides intuition for a procedure to provide an additive estimate for the permanent by the law of large numbers.

\section{The Algorithm and Analysis}

\subsection{Gaussian embedding of an arbitrary matrix $A$}
\indent
Let $C$ be a real symmetric positive semidefinite $M \times M$ matrix.  We can associate $C$ to the  centered Gaussian field $(X_{1}, \ldots, X_{M})$ of joint Gaussian random variables $X_{1}, \ldots X_{M}$ with covariance given by $\langle X_{i}X_{j} \rangle = C_{ij}$. Given a subset $\Gamma = \{\gamma_{1}, \ldots, \gamma_{k} \}$ of $\{1, \ldots, M\}$, we have the associated \textit{Gaussian subfield} $X_{\Gamma} := (X_{\gamma_{1}}, \ldots, X_{\gamma_{k}})$ by the entries of the associated submatrix of $C$. Using this terminology we can show the following result which is similar to theorem \ref{theorem2.1}.
\begin{proposition} Let $S = \{s_{1}, \ldots, s_{M} \}$ and $T= \{t_{1}, t_{2}, \ldots, t_{M} \}$ be subsets of $\{1, \ldots, 2M \}$. We then define the matrix $C_{S,T}$ whose $ij$th entries are given by $(C_{ST})_{ij} = C_{s_{i},t_{j}}$. Furthermore, let $X_{S}$ and $X_{T}$ be the associated Gaussian subfields to $S$ and $T$ respectively, then
$$\mathrm{perm}(C_{S,T}) = \langle : \prod^{M}_{i=1} X_{s_{i}}:: \prod^{M}_{i=1} X_{t_{i}}: \rangle.$$
\end{proposition}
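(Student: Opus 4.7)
The plan is to observe that this proposition is essentially a change of labels in Theorem 2.1. First I would set $Y_{1,i} := X_{s_i}$ and $Y_{2,i} := X_{t_i}$ for $1 \leq i \leq M$, so that $Y_{1,1}, \ldots, Y_{1,M}, Y_{2,1}, \ldots, Y_{2,M}$ sits inside the Gaussian space $H$ associated with $C$. Because the Wick product depends only on the underlying random variables, the Wick products $:\prod_i X_{s_i}:$ and $:\prod_i X_{t_i}:$ coincide with $:Y_{1,1}\ldots Y_{1,M}:$ and $:Y_{2,1}\ldots Y_{2,M}:$ respectively.

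Next I would apply Theorem 2.1 directly to this relabelled family to obtain
\begin{equation*}
\Big\langle :\!\!\prod_{i=1}^{M} X_{s_i}\!\!: \, :\!\!\prod_{i=1}^{M} X_{t_i}\!\!: \Big\rangle = \sum_{\sigma \in S_M} \prod_{i=1}^{M} \langle X_{s_i} X_{t_{\sigma(i)}}\rangle.
\end{equation*}
By the definition of the covariance matrix $C$ of the Gaussian field, each factor equals $\langle X_{s_i} X_{t_{\sigma(i)}}\rangle = C_{s_i, t_{\sigma(i)}} = (C_{S,T})_{i,\sigma(i)}$, and so the right-hand side becomes $\sum_{\sigma \in S_M} \prod_{i=1}^{M} (C_{S,T})_{i,\sigma(i)}$, which is precisely $\mathrm{Perm}(C_{S,T})$ by the definition of the permanent.

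There is no real obstacle here beyond bookkeeping: the only thing to verify carefully is that one does not need $S$ and $T$ to be disjoint or to exhaust $\{1,\ldots,2M\}$. This is immediate because Theorem 2.1 (and the underlying Proposition 2.2 on products of Wick products) only uses that the variables lie in the Gaussian space $H$; repetitions or overlaps between the two groups $\{X_{s_i}\}$ and $\{X_{t_i}\}$ are allowed since the complete Feynman diagrams that produce the sum range only over pairings of legs attached to \emph{different} vertices, and here the two vertices correspond to the two Wick-ordered factors regardless of whether some $s_i$ equals some $t_j$. Hence the proof reduces to invoking Theorem 2.1 and matching notation.
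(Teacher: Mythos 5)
Your proposal is correct and follows essentially the same route as the paper: the paper associates the two Wick products to two vertices with $M$ legs each and invokes Proposition 2.3, while you invoke Theorem 2.1, which is exactly the two-vertex case of Proposition 2.3 already written as a sum over permutations. Your added remark that $S$ and $T$ need not be disjoint is a sound and worthwhile clarification, but the argument is the same one.
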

\begin{proof}
We can associate each product $: \prod^{M}_{i=1} X_{s_{i}}:$ and $: \prod^{M}_{i=1} X_{t_{i}}: $ to two distinct vertices $v_1$ and $v_{2}$ with $M$ legs each, the legs $(1,i)$ for $v_{1}$ are associated to the random variables $X_{s_{i}}$ and likewise for $v_{2}$. We then have that the claim follows from proposition \ref{prop2.3}.
\end{proof}
\indent We say a real $M \times M$ matrix $A$ is \textit{$M'$-Gaussian embeddable} if there exits a Gaussian field $(X_{1}, \ldots, X_{M'})$ such that $\langle X_{j} \rangle = 0$ for $1 \leq j \leq M'$ and subsets $S = \{s_{1}, \ldots, s_{M} \}$ and $T= \{t_{1}, t_{2}, \ldots, t_{M} \}$  of $\{1, \ldots, M' \}$ such that  $\langle X_{{s_i}}X_{t_{j}} \rangle = A_{s_{i}t_{j}}$. That is, we have that $A$ appears as a submatrix given by the intersection of rows given by $S$ and columns given by $T$ of the covariance matrix of Gaussian field  $(X_{1}, \ldots, X_{M'})$. We now show that any $M \times M$ matrix $A$ is $2M$-Gaussian embeddable. We refer to the $M' \times M'$ covariance matrix $C$ whose entries are given by $C_{ij} = \langle X_{i} X_{j} \rangle $ as a \textit{Gaussian embedding} of $A$. 
\begin{proposition} \label{prop3.2}
Let $A$ be an arbitrary $M \times M$ real matrix, then $A$ is $2M$-Gaussian embeddable with a Gaussian embedding given by $C = B + \alpha{I}$ where $\alpha \geq \|A\|$ and $B$ is the  $2M \times 2M$ matrix given in block form as
$$
B = \begin{bmatrix}
0&A\\
A^{T}&0
\end{bmatrix}.
$$  \label{4.2}
\end{proposition}
\begin{proof}

We have that $B$ is a symmetric $2M \times 2M$ and therefore has real eigenvalues which we list in decreasing order in terms of absolute value as $|\lambda_{1}(B)| \geq |\lambda_{2}(B)| \geq \cdots \geq |\lambda_{2M}(B)|$. In particular, we have that $\|B\|= |\lambda_{1}(B)|$. For any $\alpha \geq |\lambda_{1}(B)|$, we have that the matrix $C = B + \alpha{I}$ is positive semi-definite and can be associated to a centered Gaussian field $(X_{1}, \ldots, X_{2M})$ where $\langle X_{i}X_{j} \rangle = C_{ij}$ Furthermore, from lemma \ref{eigB} below we have that $|\lambda_{1}(B)|= \|A\|$. From here, we have that $C_{S,T}=A$ for $S=\{1, \ldots, M \}$ and $T = \{M+1, \ldots, 2M \}$ which precisely means that $A$ is Gaussian embeddable.
\end{proof}

For a symmetric matrix $A$, we denote  $$\rho(A) = \max \{ |\lambda_{i}(A)| : \text{ $\lambda_{i}$ is an eigenvalue of $A$} \}$$
and refer to it as the \textit{spectral radius of $A$}. In order to apply our algorithm to an arbitrary real matrix $A$ from the previous section we need to find $\alpha \geq \rho(B)$ for the block matrix $B$ defined in proposition \ref{prop3.2}. The lemma below relates $\rho(B)$ and $\|A\|$.
\begin{lemma} \label{eigB} 
Let $A$ be an $M \times M$ real matrix and let $B$ be the symmetric matrix $$
B = \begin{bmatrix}
0&A\\
A^{T}&0
\end{bmatrix}.
$$
then $\|A\| = \rho(B)=\|B\|$.
\end{lemma}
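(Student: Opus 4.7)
The plan is to exploit the block structure of $B$ together with the singular value decomposition of $A$. Since $B$ is real symmetric, the identity $\|B\| = \rho(B)$ is immediate from the spectral theorem (the operator norm of a symmetric matrix equals the largest absolute value of its eigenvalues), so the substantive content is the identity $\|A\| = \rho(B)$.

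I would approach the eigenstructure of $B$ directly through the SVD. Write $A = U \Sigma V^{T}$ with $\Sigma = \operatorname{diag}(\sigma_{1}, \ldots, \sigma_{M})$, where $\sigma_{1} \geq \cdots \geq \sigma_{M} \geq 0$, and recall that $\|A\| = \sigma_{1}$. Let $u_{i}$ and $v_{i}$ denote the $i$th columns of $U$ and $V$, so that $A v_{i} = \sigma_{i} u_{i}$ and $A^{T} u_{i} = \sigma_{i} v_{i}$. Then for each $i$ one computes
\begin{equation*}
B \begin{pmatrix} u_{i} \\ v_{i} \end{pmatrix} = \begin{pmatrix} A v_{i} \\ A^{T} u_{i} \end{pmatrix} = \sigma_{i} \begin{pmatrix} u_{i} \\ v_{i} \end{pmatrix}, \qquad B \begin{pmatrix} u_{i} \\ -v_{i} \end{pmatrix} = -\sigma_{i} \begin{pmatrix} u_{i} \\ -v_{i} \end{pmatrix}.
\end{equation*}
These $2M$ vectors are mutually orthogonal (by orthonormality of the $u_{i}$ and $v_{j}$), so they form a complete eigenbasis of $B$, and the spectrum of $B$ is exactly $\{\pm \sigma_{1}, \ldots, \pm \sigma_{M}\}$. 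In particular $\rho(B) = \sigma_{1} = \|A\|$, closing the chain of equalities.

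As a cross-check I would also mention the quick squared-matrix route: the block computation
\begin{equation*}
B^{2} = \begin{bmatrix} A A^{T} & 0 \\ 0 & A^{T} A \end{bmatrix}
\end{equation*}
shows that the eigenvalues of $B^{2}$ are the union (with multiplicity) of those of $AA^{T}$ and $A^{T}A$, which coincide and equal the squared singular values of $A$. Hence $\rho(B)^{2} = \rho(B^{2}) = \rho(A^{T}A) = \|A\|^{2}$. I would probably state the SVD argument in the main proof since it also identifies eigenvectors, and relegate the $B^{2}$ identity to a parenthetical remark.

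There is really no serious obstacle: the only subtlety is handling the possibility that $A$ is singular (some $\sigma_{i} = 0$), where $\pm \sigma_{i}$ collapse to a repeated zero eigenvalue; in that case the pairs of eigenvectors above may fail to be linearly independent, but the argument still produces enough eigenvectors to span $\mathbb{R}^{2M}$, and the conclusion $\rho(B) = \sigma_{1}$ is unaffected. Thus the lemma follows.
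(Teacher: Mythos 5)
Your proof is correct, but it takes a genuinely different route from the paper. The paper argues directly with the operator norm: it proves $\|A\| \leq \rho(B)$ by testing $B$ against a stacked vector $w = [v_{1}, v_{2}]^{T}$ built from norm-attaining unit vectors for $A$ and $A^{T}$, and proves $\rho(B) = \|B\| \leq \|A\|$ by the block computation $\|Bw\|_{2}^{2} = \|Av_{2}\|_{2}^{2} + \|A^{T}v_{1}\|_{2}^{2} \leq \|A\|^{2}\|w\|_{2}^{2}$, using $\|A^{T}\| = \|A\|$. Your SVD argument instead exhibits the full eigensystem of the dilation $B$: the vectors $(u_{i}, \pm v_{i})$ with eigenvalues $\pm\sigma_{i}$, giving the stronger conclusion that the spectrum of $B$ is exactly $\{\pm\sigma_{1}, \ldots, \pm\sigma_{M}\}$, from which $\rho(B) = \sigma_{1} = \|A\|$ falls out. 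The paper's two-inequality argument is more elementary (no SVD needed) and is exactly calibrated to the norm statement; yours buys the complete spectral picture and, as a bonus, is immune to the small indexing slips that appear in the paper's version of the lower bound (where the roles of $v_{1}$ and $v_{2}$ get swapped and a square is dropped). One minor correction to your closing remark: the vectors $(u_{i}, v_{i})$ and $(u_{i}, -v_{i})$ are always orthogonal and nonzero, even when $\sigma_{i} = 0$, so linear independence never actually fails; no special handling of the singular case is needed.
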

\begin{proof}
Since $B$ is symmetric, we have that $\|B\|=|\lambda_{1}(B)|=\rho(B)$. We first show that $\|A \| \leq \rho(B)$. Indeed, we have that there exists vectors $v_{1}$ and $v_{2}$ of unit norm such that $\|A \| = \|Av_{1}\|_{2}$ and  $\|A^{T} \| = \|A^{T}v_{2}\|_{2}$. We then set $w = [v_{1}, v_{2}]^{T}$ and observe that
\begin{align*}
    (\rho(B))^{2} &= \|B \|^{2} \\
                  &\geq \frac{\|Bw \|_{2}^{2}}{\|w\|^{2}_{2}} \\
                  &= \frac{1}{2} \big( \|Av_{2}\|^{2}_{2} + \|A^{T}v_{1} \|^{2}_{2} \big) \\
                  &= \frac{1}{2} \big( \|A\|^{2}+ \|A^{T} \|^{2} \big) \\
                  &= \|A\|^{2}.
\end{align*}
To show that $\|A \| \geq \rho(B)$, we have that for any $w = [v_{1}, v_{2}]^{T} \in \mathbb{R}^{2m}$ that 
\begin{align*}
    \|Bw\|^{2}_{2} &= \|Av_{2}\|^{2}_{2} + \|A^{T}v_{1} \|^{2}_{2} \\
                   &\leq \|A\|^{2}\|v_{2}\|^{2}_{2} + \|A^{T}\|^{2}\|v_{1}\|^{2}_{2} \\
                   &= \|A\|^{2}\|w\|^{2}_{2}.
\end{align*}
It follows that $\|Bw\|_{2} \leq \|A\|\|w\|_{2}$ and the claim then follows from the definition of the operator norm and $\rho(B) = \|B\|$.
\end{proof}
 We now present the main theorem needed for estimating the permanent. 
\begin{theorem} \label{X1...X2M}
Let $A$ be an arbitrary $M \times M$ real matrix and let $B$ be the matrix given in block form as
$$
B = \begin{bmatrix}
0&A\\
A^{T}&0
\end{bmatrix}
$$ 
and let $C = B + \alpha{I}$ be positive semidefinite. Now let $(X_{1}, \ldots, X_{2M})$ be the Gaussian field associated to $C$. We have that
\begin{equation}
    \mathrm{perm}(A) = \big\langle \prod^{2M}_{j=1} X_{j} \big\rangle.
\end{equation}
\end{theorem}
\begin{proof}
We first partition the set of random variables  $\{X_{1}, \ldots, X_{M}, X_{M+1}, \ldots, X_{2M} \}$ into the two groups $S_{1} = \{X_{1}, \ldots, X_{M} \}$ and $S_{2} =\{X_{M+1}, \ldots, X_{2M} \}$. 
\newline \indent We can associate each $X_{j}$ with $:X_{j}:$ to see that $$\big\langle \prod^{2M}_{j=1} X_{j} \big\rangle = \big\langle \prod^{2M}_{j=1} :X_{j}: \big\rangle.$$
 By proposition \ref{prop2.3}, the expectation 
 $\big\langle \prod^{2M}_{j=1} :X_{j}: \big\rangle$ is equal to the sum of values $\nu(\gamma)$ complete Feynman diagrams $\gamma$ on the set of Wick products $:X_{i}:$ each of which can be thought of as a vertex $i$ with a single leg. In this case, we have that Feynman diagrams that involve pairings between two vertices within either $S_{1}$ or $S_{2}$ have value zero. It then follows, that the only complete Feynman diagrams whose values contribute to the sum are those which involve pairings between the $M$ vertices in $S_{1}$ to the $M$ vertices in $S_{2}$. In particular, we have that
 $$ \big\langle \prod^{2M}_{j=1} :X_{j}: \big\rangle  = \langle  :\prod_{j=1}^{M}X_{j}::\prod^{2M}_{k=M+1}X_{k}: \rangle.$$ The claim follows since $\langle X_{j_{1}}X_{M+j_{2}} \rangle = A_{j_{1},j_{2}}$ and the right-hand side above is equal to  $\mathrm{perm}(A)$.
\end{proof}

We present the algorithm for an arbitrary real matrix $A \in \mathbb{R}^{M \times M}$ below.

\begin{enumerate}
    \item Given $A$, compute the associated covariance matrix $C \in \mathbb{R}^{2M \times 2M}$ given by $B + \alpha{I}$ for $\alpha \geq \|A\|$. 
    \item For a fixed $N$, we draw $N$ samples $X^{(k)}  = [X^{(k)}_{1}, \ldots, X^{(k)}_{2M}]$ from the multivariate normal  $\mathcal{N}(0, C)$.
    \item For each sample, $X^{(k)}$, compute the product $\mu_{k} = X^{(k)}_{1}\ldots{X^{(k)}_{2M}}$.
    \item We then compute the sample mean as an output $S_{N} = \frac{1}{N} \sum^{N}_{k=1} \mu_{k}.$
\end{enumerate}
\indent
For step 1, it is noted that one can choose any $\alpha \geq \|A\|$ by lemma \ref{eigB}. This is so that the matrix $C = B + \alpha{I}$ is positive semidefinite. 
\newline \indent 
 To analyze the total time complexity, we assume the operations of addition and multiplication over the field of real numbers take unit time. We also assume that one can sample from the standard normal $\mathcal{N}(0, I)$ in unit time. To set $\alpha = \|A\|$, one can for example diagonalize $B$ which takes time $O(M^{3})$. To obtain $N$ samples from $\mathcal{N}(0,B+\alpha{I})$, we need to compute a Cholesky factorization $L$ of $B+{\alpha}I$ which takes $O(M^{3})$ time and then compute the product $LX$, where $X \sim \mathcal{N}(0,I)$, $N$ times resulting in an additional $O(M^{2}N)$ computation time.  This brings the cost of sampling $N$ samples to be $O(M^{3}+M^{2}N)$ in the worst case without approximation. After sampling, however, the procedure scales linearly in the number of samples as each term $\mu_{k}$ can be computed in time $O(M)$ bringing the computation after sampling to be $O(MN)$ and total computation to be $O(M^{3}+M^{2}N + MN)$.
\newline \indent 
In dealing with classes of matrices where the operator norm is known to be bounded by $d$, we can choose $\alpha = d$. In general situations, we can also take $\alpha$ the Frobenius norm $\|A \|_{F}$, which we recall is given by $\|A \|_{F} = \sqrt{\sum_{i,j} |A_{ij}|^{2}}$ for an arbitrary $M \times M$ real matrix $A$. In the first step, it suffices to take $\alpha > \|A\|_{F}$. This follows from a standard result that the operator norm of a matrix is bounded above by its Frobenius norm and include a proof below for completeness.
\begin{lemma}
Let $A$ be an arbitrary $M \times M$ real matrix then $ \| A\| \leq \|A \|_{F}$.
\end{lemma}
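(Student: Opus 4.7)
The plan is to bound $\|Av\|_2$ for an arbitrary unit vector $v \in \mathbb{R}^M$ row-by-row using the Cauchy--Schwarz inequality, then take the supremum to bring in the operator norm on the left and recognise the Frobenius norm on the right.

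First, I would fix an arbitrary $v \in \mathbb{R}^M$ and expand
\begin{equation*}
\|Av\|_2^2 \;=\; \sum_{i=1}^{M} \Bigl( \sum_{j=1}^{M} A_{ij} v_j \Bigr)^{2}.
\end{equation*}
Next, apply Cauchy--Schwarz to each row inner product, giving
\begin{equation*}
\Bigl( \sum_{j=1}^{M} A_{ij} v_j \Bigr)^{2} \;\leq\; \Bigl( \sum_{j=1}^{M} A_{ij}^{2} \Bigr) \Bigl( \sum_{j=1}^{M} v_j^{2} \Bigr) \;=\; \Bigl( \sum_{j=1}^{M} A_{ij}^{2} \Bigr) \|v\|_2^{2}.
\end{equation*}
Summing over $i$, the double sum collapses to $\|A\|_F^{2}$, yielding $\|Av\|_2^{2} \leq \|A\|_F^{2}\,\|v\|_2^{2}$, i.e.\ $\|Av\|_2 \leq \|A\|_F\,\|v\|_2$.

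Finally, restricting to $\|v\|_2 = 1$ and taking the supremum on the left gives $\|A\| \leq \|A\|_F$ directly from the definition of the operator norm. There is no real obstacle here; the only subtlety is getting the Cauchy--Schwarz step applied at the level of each row rather than once globally, which is what makes the Frobenius norm (sum of squares of all entries) appear on the right.
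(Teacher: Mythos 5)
Your proof is correct and is essentially the same argument as the paper's: a single application of the Cauchy--Schwarz inequality yielding $\|Av\|_2 \leq \|A\|_F\|v\|_2$ for arbitrary $v$. The only cosmetic difference is that you apply Cauchy--Schwarz row-by-row to the entries of $Av$, whereas the paper decomposes $Av = \sum_i v_i Ae_i$ over the columns and applies Cauchy--Schwarz to the coefficients; both collapse to the same bound.
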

\begin{proof}
Let $v = \sum^{M}_{i=1} v_{i}e_{i}$ for real coefficients $v_{i} \in \mathbb{R}$ and standard basis vectors $e_{i}$. We then have from the Cauchy-Schwarz inequality that
$$ \|Av \|^{2}_{2} \leq \big(\sum |v_{i}|^{2} \big) \big(\sum \|Ae_{i}\|_{2}^{2} \big).$$
The claim then follows from the definitions of the operator and Frobenius norms.
\end{proof}

 We now provide an error analysis for the algorithm.

\begin{proposition} \label{failureprob}
Let $S_{N}$ be the sample mean defined in the above algorithm for estimating the permanent of a real $M \times M$ matrix $A$ where we take $\alpha \geq 0$ to be any real number such that $B + \alpha{I}$ is positive definite. Then for fixed $N$, we have that  
\begin{equation}
    P(|S_{N}-\text{perm}(A)| > t) \leq \frac{3^{M}}{t^{2}N}\alpha^{2M}.
\end{equation}
\end{proposition}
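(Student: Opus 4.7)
The plan is to apply Chebyshev's inequality, which reduces the problem to bounding the second moment of a single sample. Writing $\mu_k = X_1^{(k)} \cdots X_{2M}^{(k)}$, Theorem~3.1 gives $\langle \mu_k \rangle = \text{perm}(A)$, so $\langle S_N \rangle = \text{perm}(A)$. The samples being independent, $\text{Var}(S_N) = \text{Var}(\mu_1)/N \leq \langle \mu_1^2 \rangle / N$. Chebyshev's inequality then yields
\begin{equation*}
P(|S_N - \text{perm}(A)| > t) \leq \frac{\langle \mu_1^2 \rangle}{t^2 N},
\end{equation*}
so the entire task reduces to showing $\langle (X_1 \cdots X_{2M})^2 \rangle \leq 3^M \alpha^{2M}$.

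The key structural observation is that the covariance matrix $C = B + \alpha I$ is block-structured: the diagonal $M \times M$ blocks are $\alpha I_M$ (since $B$ has zero diagonal blocks), while the off-diagonal blocks are $A$ and $A^T$. This means $X_1, \ldots, X_M$ are mutually independent $\mathcal{N}(0, \alpha)$ random variables, and likewise $X_{M+1}, \ldots, X_{2M}$ are mutually independent $\mathcal{N}(0, \alpha)$; the correlations live only between these two groups. The natural way to exploit this is to split the product into the two halves
\begin{equation*}
U = \prod_{i=1}^{M} X_i, \qquad V = \prod_{j=1}^{M} X_{M+j},
\end{equation*}
so that $\mu_1^2 = U^2 V^2$.

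Next I would apply the Cauchy-Schwarz inequality in $L^2$ to decouple $U$ and $V$:
\begin{equation*}
\langle U^2 V^2 \rangle \leq \sqrt{\langle U^4 \rangle \, \langle V^4 \rangle}.
\end{equation*}
Since the $X_i$ within each half are independent with variance $\alpha$, the fourth moment factorizes:
\begin{equation*}
\langle U^4 \rangle = \prod_{i=1}^{M} \langle X_i^4 \rangle = (3\alpha^2)^M = 3^M \alpha^{2M},
\end{equation*}
using the standard Gaussian fourth moment $\langle X_i^4 \rangle = 3 \langle X_i^2 \rangle^2$. The same computation holds for $V$, giving $\langle \mu_1^2 \rangle \leq \sqrt{3^M\alpha^{2M} \cdot 3^M\alpha^{2M}} = 3^M \alpha^{2M}$, which combined with the Chebyshev step above yields the claimed bound.

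The conceptual hurdle is spotting the Cauchy-Schwarz split; a naive Wick expansion of $\langle X_1^2 \cdots X_{2M}^2 \rangle$ produces $(4M-1)!!$ pairings and bounding them term-by-term gives a far worse constant than $3^M$. What saves the argument is the block structure of $C$: splitting the product along the partition $\{1,\ldots,M\} \cup \{M+1,\ldots,2M\}$ exactly matches the independence structure, so Cauchy-Schwarz trades the intractable joint moment for two independent fourth moments, each of which factorizes and evaluates cleanly.
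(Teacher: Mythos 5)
Your proposal is correct and follows essentially the same route as the paper's own proof: Chebyshev's inequality, a second-moment bound, Cauchy--Schwarz to split the product along the two independent halves $\{X_1,\ldots,X_M\}$ and $\{X_{M+1},\ldots,X_{2M}\}$, and the factorization of the fourth moment using $\langle X_i^4\rangle = 3\alpha^2$. The only addition is your explicit remark on why the naive Wick expansion would be worse, which the paper does not discuss but which does not change the argument.
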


\begin{proof}

We have the following by Chebyshev's inequality and independence
\begin{align*}
   P(|S_{N}-\text{perm}(A)| > t) &\leq  \frac{1}{t^{2}}Var(S_{N}) \\
   &= \frac{1}{N^{2}t^{2}}\sum^{N}_{k=1}Var(\mu_{k}).
\end{align*}
We now need to bound the variance $Var(\mu_{k}) = Var( \prod^{2M}_{i=1} X_{i})$. For this, we can apply a second moment bound and by  the Cauchy-Schwarz inequality,
$$Var( \prod^{2M}_{i=1} X_{i}) \leq \langle (\prod^{2M}_{i=1} X_{i})^{2} \rangle  \leq   \langle (\prod^{M}_{i=1} X_{i})^{4} \rangle^{1/2}  \langle (\prod^{2M}_{i=M+1} X_{i})^{4} \rangle^{1/2}.$$
Now since $(X_{i})^{M}_{i=1}$ are independent and $(X_{i})^{2M}_{i=M+1}$ are independent, we obtain that  
\begin{align*}
     \big\langle (\prod^{M}_{i=1} X_{i})^{4} \big\rangle^{1/2}  \big\langle (\prod^{2M}_{i=M+1} X_{i})^{4} \big\rangle^{1/2} &=  (\prod^{M}_{i=1} \big\langle X_{i}^{4} \big\rangle)^{1/2}  (\prod^{2M}_{i=M+1} \big\langle X_{i}^{4} \big\rangle)^{1/2}.
\end{align*}

Since $X_{i} \sim \mathcal{N}(0, \alpha)$, we have that $\langle X_{i}^{4} \rangle = 3\alpha^{2}$ hence the claim follows.

\end{proof}

\begin{remark} \label{remark 1}
For general products of multivariate Gaussians, one can use Nelsen's hypercontractivity to obtain bounds on the $p$-norm of products for $p \geq 2$. See Theorem 1.22 in \cite{S}. Using this theorem and a fourth moment method, a similar argument to proposition \ref{failureprob} yields 
 $$P(|S_{N}-\text{perm}(A)| > t) \leq \frac{1}{t^{4}} \bigg(\frac{3^{M}}{N^{3}}+ \frac{3}{N^{2}} \bigg)\bigg(3^{M}\alpha^{2M} \bigg)^{2}.$$
but we do not focus on this estimate in this work.
\end{remark}
In particular, if we take $\alpha = \|A\|$ and $N$ to be $O(\frac{1}{\epsilon^{2}})$ then we obtain an estimate of $\mathrm{perm}(A)$ within an additive error of $\epsilon (\sqrt{3}\|A\|)^{M}$ in polynomial time. We can compare this to algorithm of Gurvits, which obtains an estimate of within an additive error of $\epsilon\|A\|^{M}$. It should be noted that Gurvits showed that $\text{perm}(A) \leq \|A\|^{M}$ which means our estimate is a potentially weak one, but we can improve it by modifying the covariance matrix used in our procedure. 

\section{Adjusting the Covariance}
\subsection{Additive FPRAS}
Often in approximation theory, the goal is to obtain a fully polynomial randomized approximation scheme (FPRAS) for the quantity that is being approximated.  In our case of estimating the permanent up to an additive error, this is a randomized algorithm, which given as an input an $M \times M$ matrix $A$ together with a fixed accuracy parameter $t \in (0, 1]$, outputs a number $Z$ in time polynomial in $M$ and $t^{-1}$ such that
    $$P(|Z-\text{perm}(A)| > t) \leq \frac{1}{4}.$$
 This formalization takes into account the number of samples needed to obtain the desired failure probability for fixed $t$. 
    \newline \indent
In the case of our procedure, we obtained as a result of proposition \ref{failureprob} for fixed $t$ that for an $M \times M$ real matrix $A$ that $$P(|S_{N}-\text{perm}(A)| > t) \leq \frac{3^{M}}{t^{2}N}\alpha^{2M}$$
where $N$ is the number of samples taken for $S_{N}$ which takes $O(MN)$ time to compute after obtaining $N$ samples from the distribution $\mathcal{N}(0, B+\alpha{I})$. If we take $\alpha = \|A\|$ and assume that $\sqrt{3}\|A\| < 1$ then taking $N = \frac{4}{t^{2}}$ samples allows us to compute $S_{N}$ in time polynomial in $M$ and $t^{-1}$ such that 
$$P(|S_{N}-\text{perm}(A)| > t) \leq \frac{1}{4}.$$
In particular,  if $\sqrt{3}\|A\| < 1$ we can obtain an FPRAS for $\text{perm}(A)$ in the above sense.
\newline \indent
As mentioned above that our failure probability in proposition \ref{failureprob} is bounded in terms of $\alpha \geq \|A\|$. We chose such an $\alpha$ so that $B + \alpha{I}$ is positive semi-definite. By Schur's determinant formula the eigenvalues of $B$ are $$\pm \lambda_{1}(A^{T}A)^{1/2}, \ldots, \pm \lambda_{M}(A^{T}A)^{1/2}.$$  In particular by lemma \ref{eigB},  $-\|A\| \leq 0$ is the smallest eigenvalue of $B$. Therefore, $B + \alpha{I}$ is positive semidefinite if and only if $\alpha \geq \|A \|$. Therefore, we cannot improve our choice of $\alpha$.


We can however modify our covariance matrix by replacing the matrix $\alpha{I}$ with any diagonal matrix $D$ such that $C = B + D \succeq 0$. Here we use the notation $C \succeq 0$ to denote that the matrix $C$ is positive semi-definite.  We have the following similar result to theorem \ref{X1...X2M} and which is proven by the same argument.

    \begin{theorem} \label{genpermexp}
    Let $A$ be an arbitrary $M \times M$ real matrix and let $B$ be the matrix given in block form as
    $$
    B = \begin{bmatrix}
    0&A\\
    A^{T}&0
    \end{bmatrix}.
    $$ 
    Furthermore, let $D$ be a diagonal matrix such that $B + D \succeq 0$. Now let $(X_{1}, \ldots, X_{2M})$ be the Gaussian field associated to $B+D$. We have that
    \begin{equation*}
        \mathrm{perm}(A) = \big\langle \prod^{2M}_{j=1} X_{j} \big\rangle.
    \end{equation*}
    \end{theorem}
    
    We then can adjust our algorithm by sampling from $\mathcal{N}(0, B+D)$ and obtain the following error estimate by the same argument as proposition \ref{failureprob}.
    
    \begin{proposition} \label{genfailureprob}
                    Let $S_{N}$ be the sample mean defined in the adjusted algorithm with samples from $\mathcal{N}(0, B+D)$.  Then for fixed $N$, we have that  
    \begin{equation}
        P(|S_{N}-\text{perm}(A)| > t) \leq \frac{3^{M}}{t^{2}N} \prod^{2M}_{i=1} D_{ii}.
    \end{equation}
    \end{proposition}

    In particular, we have that sampling with covariance $B+D$ forms an additive FPRAS when $\prod^{2M}_{i=1} D_{ii} < 3^{-M}$ .  As before, we can estimate $\text{perm}(A)$  to an additive error $\epsilon \bigg(\sqrt{3\prod^{2M}_{i=1} D_{ii}}\bigg)^{M}$ in polynomial time by taking $O(\frac{1}{\epsilon^{2}})$ samples. Furthermore, this algorithm improves over algorithm in \cite{G} when $3^{M} \prod^{2M}_{i=1} D_{ii} \leq \|A\|^{2M}$. When $D = \alpha{I}$ for $\alpha \geq \|A\|$, we have that $\prod^{2M}_{i=1} D_{ii} = \alpha^{2M}$, therefore this estimate agrees with the estimate given by proposition 3.3. We now discuss how to find $D+B$ in the subsequent subsections. 

    \begin{remark}
    As in remark \ref{remark 1}, a fourth moment method and hypercontractivity yields
     $$P(|S_{N}-\text{perm}(A)| > t) \leq \frac{1}{t^{4}} \bigg(\frac{3^{M}}{N^{3}}+ \frac{3}{N^{2}} \bigg)\bigg(3^{M} \prod^{2M}_{i=1} D_{ii} \bigg)^{2}$$ but we do not focus on this estimate in this work. 
    \end{remark}
     \subsection{Diagonally-dominant matrices}
     One class of matrices such that $B + D \succeq 0$ is provided when $D$ is such that $B + D$ is diagonally dominant. Recall that a symmetric matrix $L$ is said to be diagonally dominant if the diagonal entry of each row is greater than the sum of the absolute values of the other entries of each row, that is $L_{ii} \geq \sum_{j \neq i} |L_{ij}|$.   Diagonally dominant matrices are positive semi-definite by the Gershgorin circle theorem. Given a square matrix $B$, we define the diagonal matrix $D_{d}(B)$ by the diagonal entries $(D_{d}(B))_{ii} = \sum_{j} |B_{ij}|$ for all $i$. We often refer to $D_{d}(B)$ as $D_{d}$ when the context is clear. 
     \newline \indent
    An example of diagonally dominant matrices are Laplacian matrices which are matrices $L$ such that $L_{ij} \leq 0$  when $i \neq j$ and $\sum_{j} L_{ij} = 0$ for all $i$. Laplacian matrices can be viewed as the combinatorial Laplacian of non-negative weighted graphs. For any weighted graph $G = (V,E,w)$ with weighted adjacency matrix given by a symmetric matrix $w$ with $w_{ii} = 0$ for all $i$ and $w_{ij} \geq 0$ for all $(i,j)$, we define the combinatorial Laplacian of $G$ to be the Laplacian matrix $C$ given by $C  = D_{d}-w$. 
        \newline \indent
     Given $B$ as before, one can compute the diagonal matrix $D_{d}$ with diagonal entries $(D_{d})_{ii} = \sum_{j \neq i} |B_{ij}|$ in worst case $O(M^{2})$. In the case when $A$ has non-negative entries, we have that the centered Gaussian field with covariance matrix given by $B+D_{d}$ provides a better bound on the failure probability of proposition \ref{genfailureprob} then $B + \alpha{I}$ for $\alpha \geq \|A\|$ as follows from proposition \ref{Diagdom} below. This proposition is a consequence of the arithmetic-geometric mean inequality. 
     \begin{proposition} \label{Diagdom}
     Let $A$ be a real $M \times M$ matrix with non-negative entries and let $B$ be the block matrix $$
    B = \begin{bmatrix}
    0&A\\
    A^{T}&0
    \end{bmatrix}.
    $$ 
    Furthermore, let $D_{d}$ be the diagonal matrix with diagonal entries $(D_{d})_{ii} = \sum_{j \neq 0} |B_{ij}|$. We then have that $$ \prod^{2M}_{i=1} (D_{d})_{ii} \leq \alpha^{2M}$$
    for $\alpha \geq \|A\|$.
     \end{proposition}
    \begin{proof}
    Let $\textbf{1}$ be the vector of ones. We then have by definition that 
    \begin{align*}
        \|A\| &= \sup_{|v|=1} v^{T}Av \\
            &\geq \frac{1}{M} \textbf{1}^{T}A \textbf{1} \\
            &= \frac{1}{M} \sum^{M}_{k=1} A_{k}    
    \end{align*}
    where $A_{k} = \sum_{j=1}^{M} A_{kj}$ is the $kth$ row sum of $A$. Then by the arithmetic-geometric mean inequality, we have that 
    $$\frac{1}{M} \sum^{M}_{k=1} A_{k} \geq (\prod^{M}_{k=1} A_{k})^{1/M}.$$
    It follows then that $\|A\|^{M} \geq \prod^{M}_{k=1} A_{k} $. In particular, for $$B = \begin{bmatrix}
        0 & A \\
        A^{T}&0 ,
    \end{bmatrix}$$
    we have that for $\alpha \geq \|A \|$, that 
    $$ \alpha^{2M} \geq \prod_{k=1}^{M} A_{k} \prod^{M}_{k=1}A^{T}_{k}.$$
    \end{proof}
     The statement of proposition 3.5 does not hold when the entries of $A$ are possibly negative. For example, if we set $A$ to be the matrix $\begin{bmatrix}
         1&1\\ 1&-1
     \end{bmatrix}$
     we have that $\|A\| = \sqrt{2}$ but $D_{d}$ is the diagonal matrix with diagonal entries equal to $2$.  
     \newline \indent 
      To give an example of a case where sampling from the covariance matrix $D_{d} + B$ improves over the case of sampling from the covariance $B + \|A\|{I}$, consider the case where $A$ is taken to be the matrix 
    $$A = \frac{1}{6}\begin{bmatrix}
        0&1&1&1&1&1&1&1&1&1\\
        1&1&1&0&0&0&0&0&0&0 \\
        1&1&0&1&0&0&0&0&0&0 \\
        1&0&1&0&1&0&0&0&0&0 \\
        1&0&0&1&0&1&0&0&0&0 \\
        1&0&0&0&1&0&1&0&0&0 \\
        1&0&0&0&0&1&0&1&0&0 \\
        1&0&0&0&0&0&1&0&1&0 \\
        1&0&0&0&0&0&0&1&0&1 \\
        1&0&0&0&0&0&0&0&1&1 \\
    \end{bmatrix}.$$
    We then have sampling from $B+ D_{d}$ forms an FPRAS for approximating the permanent of $A$. If we instead sampled from $B + \alpha{I}$ for $\alpha \geq \|A\|$ to approximate the permanent then since $\sqrt{3}\|A \| > 1$, we would not obtain an FPRAS for approximating $\text{perm}(A)$. By proposition \ref{genfailureprob}, we have that sampling from $B + D_{d}$ forms an FPRAS for estimating the permanent of $A$, since in this case $\det(D_{d}) < 3^{-10}$ which implies that $3^{10}\det(D_{d}) < 1$.
    \newline \indent We now highlight an application of our results of diagonally dominant matrices to counting perfect matchings of bipartite graphs. Observe that given a bipartite graph $G = (V_{1}, V_{2}, E)$ with $|V_{1}| = |V_{2}|= M$, the  adjacency matrix $B$ of $G$ has the block form 
    $$B  = \begin{bmatrix}
        0&A \\
        A^{T}&0
    \end{bmatrix}$$
    and the number of perfect matchings of $G$ is given by the $\text{perm}(A)$. By proposition \ref{genfailureprob}, if we take our covariance matrix to be $C = D_{d} + B$ then by taking $K/\epsilon^{2}$ samples,  our algorithm estimates $\text{perm}(A)$ within an  additive error of $\epsilon\bigg(\sqrt{3^{2M}\prod^{2M}_{i=1}({D}_{d})_{ii}}\bigg)$ with probability at least $1- K^{-1}.$ But since $(D_{d})_{ii}$ is the degree $\deg(v_{i})$ of the $i$th vertex $v_{i}$ of $G$, we have that our algorithm estimates the number of perfect matchings of $G$ within an additive error $\epsilon\bigg(\sqrt{3^{2M}\prod^{2M}_{i=1}deg(v_{i})}\bigg)$. If we wish to find $G$ where the upper bound of the failure probability of proposition \ref{genfailureprob} improves on the procedure in \cite{G}, we would need the degrees of $G$ to satisfy   $$3^{M} \prod_{i=1}^{2M} deg(v_{i}) \leq \lambda_{1}(B)^{2M}$$
    where we used lemma \ref{eigB} for the right hand side. By the arithmetic-geometric mean inequality, it suffices to find $G$ where the vertices satisfy
    $$ \frac{1}{2M} \sum_{i=1}^{2M} deg(v_{i}) \leq \frac{\lambda_{1}(B)}{\sqrt{3}}.$$
    This occurs for example, if $G$ is the bipartite double cover of a star graph on $M$ vertices for $M \geq 20$. It is likely many examples exist, for instance, one can view $\frac{1}{2M} \sum_{i=1}^{2M} deg(v_{i}) = d_{avg}$ where $d_{avg}$ is the average degree of $G$ then if we let $d_{max} = \max \{ deg(v_{i})  \}$ then it is known that $\lambda_{1}(B) \in [d_{avg}, d_{max}]$. 

    \subsection{A semidefinite program to reduce variance}
   We begin this subsection by recalling primal and dual problems of semidefinite programming. For symmetric matrices $S$ and $S'$, we use the notation $S \succeq S'$ to mean that the matrix $S-S'$ is postive semidefinite and use $\langle S, S' \rangle_{F} = tr(SS')$ to denote the Frobenius inner product.   Let $A_{1}$, \ldots $A_{m}, C$  be symmetric real $n \times n$ matrices and $b_{1}, \ldots, b_{m}$ to be real scalars.  A semidefinite program with $n^{2}$ variables (regarded as an $n \times n$ matrix) and $m$ constraints can be written as solving the following problem over symmetric matrices $X$,
    \begin{equation} \label{primal}
\begin{array}{ll@{}ll}
    \text{maximize} & \langle C, X \rangle_{F} \\
    \text{subject to}& \langle X, A_{j} \rangle_{F} \leq b_{j},  &&1 \leq j \leq m. \\
    &X \succeq 0 
\end{array}.
\end{equation}
    We refer to a matrix $X$ that satisfies the constraints given by \ref{primal} as a feasible candidate. We follow the convention to refer to the problem \ref{primal} as the \textit{primal} problem. We write the corresponding \textit{dual} problem as
        \begin{equation} \label{dual}
\begin{array}{ll@{}ll}
    \text{minimize} & b^{T}y \\
    \text{subject to}&  \sum^{m}_{j=1} y_{j}A_{j} -C \succeq 0 \\
    &y_{j} \geq 0 &&1 \leq j \leq m.
\end{array}.
\end{equation}
where we took $b = [b_{1}, \ldots, b_{m}]^{T}$. We refer to a vector $y \geq 0$ that satisfies the constraints of \ref{dual} as a dual feasible candidate. It well known that the weak duality holds for semidefinite programs,  that if $y^{*}$ is a solution of \ref{dual} and $X$ is solution of \ref{primal} then $b^{T}y^{*} \geq \langle C,X \rangle_{F}$. When $b^{T}y^{*} = \langle C,X \rangle_{F}$, we say strong duality holds.
    \newline \indent We can consider now optimizing the right hand side of proposition \ref{genfailureprob} over all $D$ such that $B+D \succeq 0$. We can state this formally as the following optimization program
    \begin{equation}
    \begin{array}{ll@{}ll}
    \text{minimize}  & \det(D) &\\
    \text{subject to}&  B+ D \succeq 0 \\
    \end{array}
    \tag{$P_{1}$}.
    \end{equation}
    We are unaware of convex interpretation of the above program for which a polynomial time procedure can be applied, but we can relax the problem by considering the trace as the objective function. This yields the following 
    \begin{equation*}
    \begin{array}{ll@{}ll}
    \text{minimize}  &  trace(D) &\\
    \text{subject to}&  B+ D \succeq 0 \\
    \end{array}
    \end{equation*}
    which can be rephrased as the dual semi-definite program
    \begin{equation} \label{OurSDP}
    \begin{array}{ll@{}ll}
    \text{minimize}  &  \textbf{1}^{T}x &\\
    \text{subject to}&  \sum x_{i}e_{i}e_{i}^{T} + B \succeq 0 .
    \end{array}
    \tag{$P_{2}$}.
    \end{equation}
    where we take $\textbf{1} \in \mathbb{R}^{2M}$ to be the vector consisting of ones and $x \in \mathbb{R}^{2M}$ to be the vector formed by the diagonal entries of $D$. 
    This program has the benefit of being well-studied and admitting a number of polynomial time numerical algorithms that obtain an approximate $\hat{x}$ to a solution $x^{*}$ of $P_{2}$. 
    \newline \indent 
    In reference to proposition \ref{genfailureprob}, if we let $D^{*}$ be the diagonal matrix with diagonal entries $D^{*}_{ii} = x^{*}_{i}$ then we have that sampling from the covariance $D^{*} + B$ improves over the case where we are sampling from $\alpha{I} + B$ for $\alpha \geq \|A\|$. This is stated formally in the proposition below that is again proved by the use of the arithmetic-geometric mean inequality.
    \begin{proposition}
    Let $A$ be a real $M \times M$ matrix and let $B$ be the block matrix $$
    B = \begin{bmatrix}
    0&A\\
    A^{T}&0
    \end{bmatrix}.
    $$ 
    Futhermore, let $D^{*}$ be diagonal matrix corresponding to the solution in $P_{2}$. We have that
    $$ \prod^{2M}_{i=1} D^{*}_{ii} \leq \alpha^{2M}$$
    for $\alpha \geq \|A\|$.
    \end{proposition}
    \begin{proof}
    Since $B + \alpha{I} \succeq 0$, we have that $trace(D^{*}) \leq trace(\alpha{I})$, in particular, we have that ${\sum_{i} D^{*}_{ii}}/2M \leq \alpha$. Therefore we have that $$\det(D^{*})^{1/2M} = (\prod_{i=1}^{2M} D^{*}_{ii})^{1/2M} \leq \sum_{i}  D^{*}_{ii}/2M  \leq \alpha = \det(\alpha{I})^{1/2M}.$$
    \end{proof}
    If we translate the convex set $\{x : \sum x_{i}e_{i}e_{i}^{T} + B \succeq 0 \}$ in $P_{2}$ by the diagonal matrix $D_{d}$ defined in proposition \ref{Diagdom}, we obtain the following semi-definite program
    
    \begin{equation} \label{MaxCut}
    \begin{array}{ll@{}ll}
    \text{minimize}  &  \textbf{1}^{T}x &\\
    \text{subject to}&  \sum x_{i}e_{i}e_{i}^{T}  \succeq D_{d} - B .
    \end{array}
    \tag{$P_{3}$}
    \end{equation}

    The solutions to problems \ref{OurSDP} and \ref{MaxCut} are related, in particular, if  $y^{*}$ is a solution to \ref{MaxCut} if and only if $\sum y^{*}_{i}e_{i}e_{i}^{T} - D_{d}$ is be a solution of \ref{OurSDP}.
    \newline \indent
  The problem \ref{MaxCut} is the dual problem of a specific example of well-known semi-definite program that was used in work of Goemans-Williamson \cite{GW} which we now briefly recall.   
    \newline \indent 
   Following \cite{GW}, let $G = (V,E)$ be an arbitrary graph with non-negative edge weights $w_{ij} \geq 0$. We define a cut to a disjoint partition $(S, \bar{S})$ of the vertex set $V$ and the weight of a cut, $w(S,\bar{S}) = \sum_{i \in S, j \in \bar{S}} w_{ij}$. The Max-Cut problem seeks to find a cut $(S,\bar{S})$ that maximizes $w(S,\bar{S})$. This problem is known to be $NP$-complete \cite{K} and can be formulated as the following optimization problem.
    \begin{equation}
    \begin{array}{ll@{}ll}
    \text{max}  &  \frac{1}{2}\sum_{(i,j) \in E} w_{ij}(1-y_{i}y_{j}) &\\
    \text{where}& y_{i} \in \{-1,1\}
    \end{array}.
    \label{IntProgram}
    \end{equation}
    In \cite{GW}, Goemans-Williamson considered a semidefinite program that was shown by Arora and Kale in \cite{AK} to be equivalent to the following primal problem.
    \begin{equation} \label{MaxCutprimal}
    \begin{array}{ll@{}ll}
    \text{maximize} & \langle C, X \rangle_{F} \\
    \text{subject to}& X_{ii} \leq 1,  &&i \in V \\
    &X \succeq 0 
\end{array}
    \end{equation}

    The Goemans-Williamson approximation to the Max-Cut problem \eqref{IntProgram} proceeds by first obtaining a solution $X$  to \eqref{MaxCutprimal} with Gram vectors $v_{1}, \ldots, v_{n}$. The algorithm then produces a random cut $(S,\bar{S})$ by drawing a unit vector $u$ uniformly from the unit sphere in $\mathbb{R}^{n}$ and setting $S = \{ i: \langle v_{i}, u \rangle \geq 0 \}$ and $\bar{S} = \{i: \langle v_{i}, u \rangle < 0 \}$. The  weight $w(S,\bar{S})$ has expectation that was shown in \cite{GW} to satisfy $$E[w(S,\bar{S})] \geq 0.878MAXCUT(G)$$ where $MAXCUT(G)$ is the value of the Max-Cut of $G$. 
    \newline \indent
    We have that the block matrix $B$ can be thought of as the adjacency matrix of the real weighted bipartite graph $G = (V_{1}, V_{2}, E)$ where $B_{ij}$ indicates the weight of the edge $(i,j)$. In particular, when $B_{ij} \geq 0$ we have that $C = D_{d}-B$ is the combinatorial Laplacian matrix of the bipartite graph $G$ and \ref{MaxCut} is the dual problem to the primal problem \eqref{MaxCutprimal}. The primal problem \eqref{MaxCutprimal} can be thought of as the semidefinite program associated to the Max-Cut problem \eqref{IntProgram} of the bipartite graph $G$. For this case, since $B_{ij} \geq 0$, then $MAXCUT(G)$ is attained when the cut $(S, \bar{S})$ is $(V_{1}, V_{2})$. This is the idea behind the following proposition that says the solution to \ref{OurSDP} is attained by the diagonally dominant case when $B_{ij} \geq 0$. 

    \begin{proposition} \label{Prop4.4}
    Let $A$ be a real $M \times M$ matrix with non-negative entries and let $B$ be the block matrix $$
    B = \begin{bmatrix}
    0&A\\
    A^{T}&0 \end{bmatrix}$$ then the solution to \ref{OurSDP} is obtained at $x = diag(D_{d})$.
    \end{proposition}
    \begin{proof}
    It suffices to show that a solution to \ref{MaxCut} is attained at $2diag(D_{d})$. For this problem, we consider the primal problem which is the problem dual to \ref{MaxCut} stated as
\begin{equation}
\begin{array}{ll@{}ll}
    \text{maximize} & \langle C, X \rangle_{F} \\
    \text{subject to}& X_{ii} \leq 1,  &&i \in V \\
    &X \succeq 0 
\end{array}
\end{equation}
where any primal candidate $X$ is assumed to be symmetric, the inner product $\langle \cdot, \cdot \rangle_{F}$ is the Frobenius inner product, $V$ is the set $\{1, \ldots, 2M \}$, and $C$ is the combinatorial Laplacian of $B$. By Slater's condition \cite{BV}, the strong duality holds, therefore the value of the primal problem is equivalent to the value of the dual problem \ref{MaxCut}. We recall that $X \succeq 0$ if and only if there exists Gram vectors $v_{1}, \ldots, v_{n}$ such that $X_{ij} = v_{i}^{T}v_{j}$. Also, since $C$ is a Laplacian matrix, we can write $\langle C, X \rangle_{F} = \frac{1}{2}\sum_{i,j } B_{ij}\|v_{i}-v_{j} \|^{2}$ for the Gram vectors $v_{1}, \ldots, v_{2M}$ of $X$. Therefore, we can rewrite the primal as finding $v_{1}, \dots, v_{2M}$ such that 
\begin{equation} 
    \begin{array}{ll@{}ll}
    \text{max}  &  \frac{1}{2}\sum_{i,j } B_{ij}\|v_{i}-v_{j} \|^{2} &\\
    \text{where}& \|v_{i}\|^{2} \leq 1 \text{ for all $i \in V$}.
    \end{array}.
\end{equation}  We then have that since $B$ has a bipartite structure and the triangle inequality that $\frac{1}{2}\sum_{i,j } B_{ij}\|v_{i}-v_{j} \|^{2} \leq 4\sum_{j=M+1}^{2M} \sum^{M}_{i=1} B_{ij}$. 
Futhermore, we have that for a  unit vector $u$, the upperbound is attained at $v_{i} = u$ for $i =1, \ldots, M$ and $v_{i}= -u$ for $i=M+1, \ldots, 2M$. Since $4\sum_{j=M+1}^{2M} \sum^{M}_{i=1} B_{ij} = 2trace(D_{d})$, we have proven our claim.
    \end{proof}
For a general real matrix $A$, the matrix $B$ may have negative entries. In this general case, the conclusion of proposition \ref{Prop4.4} may not hold. Therefore one would need to solve the dual program \ref{OurSDP}, the solution of which can be approximated numerically in polynomial time. 
    \newline \indent 
    One approach to \ref{OurSDP} are interior point methods, the first use of which appeared in Karmarker's work \cite{K} in the context of linear programming problems. These methods involve a choice of a barrier function defined on the interior of the feasible region and goes to infinity at the boundary of the region. For our problem \ref{OurSDP}, our feasible set is the set  $K = \{x \in \mathbb{R}^{2M} : S(x) \succeq 0 \}$ where $S(x) = \sum x_{i}e_{i}e_{i}^{T} + B$ as the feasible region. This set is a closed convex subset of the cone of semi-positive definite matrices and interior point methods for such problems were analyzed in \cite{NN1} whose approach we now summarize.
    \newline \indent
    For $K$, we can consider the log-barrier function $\varphi(x) = -\log\det(S(x))$ for $K$. This function is a variation of the log barrier function used in \cite{NN1}. From here, it is clear that $\varphi(x) \to \infty$ as $x \to \partial{K}$. Furthermore, it can shown (see for example \cite{Ne}) that $\varphi$ is $2M$-self-concordant and strictly convex in the interior $int(K)$ of $K$. 
    \newline \indent 
    We then consider the modified objective $\phi_{t}(x) = t\textbf{1}^{T}x + \varphi(x)$, the points $\bar{x}_{t} = \text{argmin}_{x \in S\cap K} \phi_{t}(x)$ form a path in the interior $int(K)$ which tends to a solution $x^{*}$ of $P_{2}$ as $t \to \infty$. This path is referred to as the \textit{central path}. 
    Interior point methods attempt to follow this central path by first finding a initial point and time $(x_{0}, t_{0})$ such that the Newton decrement defined as  $$\lambda_{\phi_{t}}(x) = \sqrt{\nabla\phi_{t}(x)(\nabla^{2}\varphi(x))^{-1}\nabla\phi_{t}(x)}$$ is sufficiently small. 
    The method then proceeds by setting $\hat{x}_{0} = x_{0}$ and increases the time $t_{k} = (1+h)t_{k-1}$ and updates position from $\hat{x}_{k-1}$ to $\hat{x}_{k}$ according to a damped Newton update applied to $\phi_{t_{k}}$. It can then be shown that after $O(\sqrt{M}\log(M/\epsilon))$ iterations that the $k$th iterate $\hat{x}_{k}$ satisfies $\textbf{1}^{T}(\hat{x}_{k}-x^{*}) < \epsilon$ for a solution $x^{*}$ of \ref{OurSDP}. 
    \newline \indent
    The computation of the overall computation cost of the procedure requires the factoring in the cost of each iterate which as mentioned is an application of a damped Newton's method to the function $\phi_{t_{k}}$ at the point $\hat{x}_{k}$. The cost of each iterate is dominated by the cost of computing and inverting the Hessian $\nabla^{2}\phi_{t_{k}}(\hat{x}_{k})$ which takes time $O(M^{4})$. This brings the total cost of the entire procedure to find an approximate to a solution $x^{*}$ of \ref{OurSDP} to ${O}^{*}(M^{4.5})$\footnote{Following \cite{JKL+}, the notation $O^{*}$ hides $M^{o(1)}$ and  $\log^{O(1)}(M/\epsilon)$ factors}. 
    \newline \indent 
    A more recent approach to interior point methods for semidefinite programming in \cite{JKL+} modifies the procedure and can be applied to our problem an approximate solution in time ${O}^{*}(M^{3.5})$. The approach in \cite{JKL+} is for general semi-definite programs. %

\subsection{Relationship to Max-Cut and cut-norms}
In the previous subsection, we discussed how to find a diagonal matrix $D$, through the use of the semidefinite program \ref{OurSDP}. By proposition \ref{genfailureprob}, the upper bound of the failure probability of our algorithm for estimating $\mathrm{perm}(A)$ is given in terms of the diagonal entries of $D$. In particular, we saw in proposition \ref{Prop4.4}, that if $A_{ij} \geq 0$, then the solution to \ref{OurSDP} is attained at $D_{d}$ the trace of which has value $MAXCUT(G)$ for the bipartite graph $G = (V_{1}, V_{2}, E)$ with edge weights given by the weighted adjacency matrix $B$. We now discuss a hint of a connection of the case when the entries of $A$ are real and potentially negative to the problem of finding the Max-Cut of the real weighted bipartite graph $G = (V_{1}, V_{2},E)$. Before doing so, we introduce two matrix norms featured in the work of Alon and Naor in \cite{AN}. 


 Let $A$ be an $M_{1} \times M_{2}$ real matrix, we define the cut norm $\|A\|_{C}$ of $A$ to be 
$$\|A \|_{C} = \max \{ |\sum_{i,j} A_{ij}| : I \subset R, J \subset C \} $$ where we took $R = \{1, \ldots, M_{1} \}$ and $C = \{ 1, \ldots, M_{2} \}$. We also define the infinity-one norm 
$$\|A\|_{\infty \to  1 } = \max \{ \sum_{i,j} A_{i,j}x_{i}y_{j} : x_{i} \in \{-1,1 \}, y_{i} \in \{-1,1\} \}.$$
The following relationship between these two norms was given in lemma 3.1 in \cite{AN} which we now state.

\begin{lemma}
\label{cutnormandinf}
Let $A$ be an $M_{1} \times M_{2}$ matrix, then 
$$\|A\|_{C}  \leq  \|A\|_{\infty \to 1 } \leq 4 \|A\|_{C}.$$ Moreover, if the sum of each row  and  the sum of each column of $A$ sum is zero then $\|A\|_{\infty \to 1 }  = 4\|A\|_{C}$.
\end{lemma}
We recall the notation in the previous subsection. Let $G = (V,E)$ be an edge weighted graph with real edge weights $w_{ij}$. We refer to a partition $(S,\bar{S})$ of $V$ as a cut and now let $E(S,\bar{S}) = \{ (i,j) : i \in S, j\in \bar{S} \} \subset E$. We then let $w(S,\bar{S}) = \sum_{(i,j) \in E(S,\bar{S)}} w_{ij}$ be the sum of the weights of the cut and let $\bar{w}(S, \bar{S}) = \sum_{(i,j) \in E \setminus E(S,\bar{S})} w_{ij}$ be the sum of the weights not in the cut. As before, we define the $MAXCUT(G)$ to be the value of the Max-Cut of $G$. 
\newline \indent 
The following simple observation is needed for this subsection which follows by adding and substracting the weights within a cut. 
\begin{lemma} \label{cutdec}
Let $(S, \bar{S})$ be a cut of an edge weighted graph $G = (V,E)$ with real edge weights given by $w_{ij}$. We then have
$$w(S,\bar{S}) - \bar{w}(S,\bar{S}) = 2w(S, \bar{S}) - \sum_{(i,j) \in E} w_{ij}.$$ 
\end{lemma}
We have the following consequence of lemma \ref{cutdec}.

\begin{corollary} \label{cutnormMaxcut}
Let $G = (V_{1}, V_{2}, E)$ be the real edge weighted bipartite graph with weighted adjacency matrix given by $$B = \begin{bmatrix}
    0&A \\
    A^{T}&0
\end{bmatrix}$$ where $A$ is an $M \times M$ real matrix. Furthermore if $W(V_{1},V_{2}) = 0$ then 
$$MAXCUT(G) = \frac{1}{2}\|A\|_{\infty \to 1}.$$ Furthermore, if the columns and rows of $A$ sum to zero then
$$MAXCUT(G) = \|B \|_{C} = 2\|A\|_{C}.$$
\end{corollary}.

\begin{proof}
Let $(S, \bar{S})$ be a cut such that $w(S, \bar{S})= MAXCUT(G)$. By lemma \ref{cutdec} and the fact that $w(V_{1},V_{2}) = \sum_{(i,j)} w_{ij} = 0$, we have the following holds 
\begin{align*}
    |2W(S,\bar{S})| &= \bigg| \max_{y_{i} \in \{-1, 1 \}}
    \bigg\{\frac{1}{4}\sum_{i,j} B_{ij}(1-y_{i}y_{j}) - \frac{1}{4}\sum_{i,j} B_{ij}(1+y_{i}y_{j}) \bigg\} \bigg| \\ 
    &= \bigg|\max_{y_{i} \in \{-1, 1 \}}
    \bigg\{- \sum^{2M}_{j=M+1}\sum^{M}_{i = 1} B_{ij}y_{i}y_{j}) \bigg\} \bigg| \\
    &= \|A\|_{\infty \to 1}.
\end{align*}
If we further assume that columns and rows sum to zero then by  lemma \ref{cutnormandinf} it follows that
$w(S, \bar{S}) = 2\|A \|_{C} = \|B\|_{C}$.
\end{proof}
It was shown in proposition 3.2 of \cite{AN} in computing the cut norm $\|A\|_{C}$ is MAX-SNP hard. It follows that computing $MAXCUT(G)$ for a real weighted bipartite graph is MAX-SNP hard. We state this formally below and include an adaption of the proof in \cite{AN} for completeness. 

\begin{corollary}
If $H = (V,E)$ is an unweighted graph then there is an efficient way to construct a real weighted bipartite graph $G$ with weighted  adjacency of the form 
$$B = \begin{bmatrix}
    0&A \\
    A^{T}&0
\end{bmatrix}$$
for some real matrix $M \times M$ matrix such that $$MAXCUT(H) = \frac{1}{2}MAXCUT(G) .$$ Therefore, the problem of computing the Max-Cut of real weighted bipartite graphs is MAX-SNP hard.  
\end{corollary}

\begin{proof}
Fix an arbitrary orientation on $H$. We now define the $2|E| \times |V|$ matrix $A'$ given in \cite{AN}. For each $1 \leq i \leq m$, if the edge $e_{i}$ is oriented from $v_{j}$ to $v_{k}$, then we set $A'_{2i-1,j} = A'_{2i,k} = 1$ and $A'_{2i-1,k} = A'_{2i, j} = -1$. We set all other entries of $A'$ to zero. 
It can be checked that $\|A'\|_{C} = MAXCUT(H)$. We let $M = \max(2|E|, |V|)$ and let $A$ be the $M \times M$ block matrix, given by 
$$A = \begin{bmatrix}
     A'&0_{(2|E|-V) \times (2|E|-|V|)}
\end{bmatrix}$$ if $M = 2|E|$ and 
$$  A = \begin{bmatrix}
  A' \\ 0_{(|V|-2|E|) \times (|V|-2|E|)} 
\end{bmatrix}$$  if $M = |V|$.
Then $\|A\|_{C} = \|A'\|_{C}$.  If we let $B$ be the block matrix $$B = \begin{bmatrix}
    0&A \\
    A^{T}&0
\end{bmatrix}$$ which we can take to be weighted adjacency matrix of $G$. By corollary \ref{cutnormMaxcut}, we have that $MAXCUT(G) = \|B\|_{C} = 2\|A\|_{C}$.  Since it is known that the Max-Cut problem is MAX-SNP hard \cite{H},  our claim follows.
\end{proof}
In the work \cite{AN}, an approximation to $\|A\|_{\infty \to 1}$ was given by considering the semidefinite program \footnote{Here we follow \cite{AN}, but this problem can be shown to be equivalent to a problem in the form of the primal problem \ref{primal}.}
\begin{equation} \label{inftyonenormSDP}
    \begin{array}{ll@{}ll}
    \text{maximize} & \sum_{i,j} A_{i,j} \langle u_{i}, v_{i} \rangle  \\
    \text{subject to}& \|u_{i} \| = \|v_{j} \| = 1 \\ 
    \tag{$P_{4}$}
\end{array}
\end{equation}
where $u_{1}, \ldots, u_{M_{1}}$ and $v_{1}, \ldots, v_{M_{2}}$ are vectors in $\mathbb{R}^{M_{1} + M_{2}}$. We observe that if $\chi^{*}$ is the value of a solution to \ref{inftyonenormSDP}, then $\chi^{*} \geq \|A\|_{\infty \to 1}$. It was shown in \cite{AN} that one can obtain an approximation for $\|A\|_{\infty \to 1}$ and hence by lemma \ref{cutnormandinf} an approximation to $\|A\|_{C}$. Therefore corollary \ref{cutnormMaxcut} implies that one can approximate $MAXCUT(G)$ by using the methods in \cite{AN}. In \cite{AN}, it was shown that there exists a polynomial time randomized algorithm  that obtains $x_{i}, y_{j} \in \{-1, 1 \}$ such that  $\sum_{i,j} A_{ij}x_{i}y_{j}$ has expectation equal to $\rho\chi^{*}$ for $\rho = {2\log(1+\sqrt{2})}/{\pi}$. The procedure in \cite{AN} is based on a proof of Grothendieck's inequality which we now state \cite{Kr}. 

\begin{lemma} \label{Grothineq}
Let $A$ be an $M_{1} \times M_{2}$ real matrix and let $u_{1}, \ldots, u_{M_{1}}$ and $v_{1}, \ldots, v_{M_{2}}$ be unit vectors. Then there exist $x_{i}, y_{i} \in \{-1,1 \}$ such that 

$$\sum_{i,j} A_{i,j} \langle u_{i}, v_{i} \rangle \leq K_{G} \sum A_{i,j}x_{i}y_{j}.$$ In particular, it follows 
$$\sum_{i,j} A_{i,j} \langle u_{i}, v_{i} \rangle \leq K_{G} \|A\|_{\infty \to 1}.$$
\end{lemma}
Here $K_{G}$ is a universal constant whose value is unknown, but in \cite{Kr}, \cite{BK},  it was shown that $\sqrt{2} \leq K_{G} < {\pi}/{(2\log(1+\sqrt{2}))}$. Using this, we can show a relationship between \ref{MaxCut} and $\|A\|_{\infty \to 1}$. This relationship can be thought of as analogue of proposition \ref{Prop4.4} which was proved by showing the solution to \ref{MaxCut} is attained at $4\|A\|_{1,1}$ where we define
$\|A\|_{1,1} = \sum_{i,j} |A_{i,j}|.$
We now state the proposition. 
\begin{proposition}
    \label{Lastprop}
    Let $A$ be a real $M \times M$ matrix and let $B$ be the block matrix $$
    B = \begin{bmatrix}
    0&A\\
    A^{T}&0 .\end{bmatrix}$$  then the solution to \ref{MaxCut} is bounded above by $2K_{G} \|A\|_{\infty \to 1} + 2\|A\|_{1,1}$. Futhermore, the solution to \ref{OurSDP} is bounded by $2K_{G} \|A\|_{\infty \to 1}$. 
\end{proposition}

\begin{proof}
Let $B_{\geq 0}, B_{<0}$ be the $2M \times 2M$ matrices whose nonzero entries consist of the non-negative and negative entries of $B$ respectively. We then let $C= D_{d}(B) - B$, $C_{\geq 0} =  D_{d}(B_{\geq 0})-B_{\geq 0},$ and $C_{<0} = D_{d}(B_{<0}) - B_{<0}$. We let $G=(V_{1}, V_{2},E)$ be the real weighted bipartite graph with edge weights given by $B$. We let $G_{+}$ and $G_{-}$ be the weighted bipartite graphs with edge weights given by $B_{\geq 0}$ and $-B_{\leq 0}$ respectively. We have that $C_{\geq 0}$ is the Laplacian of $G_{+}$ and $C_{\leq 0}$ is the signless Laplacian of $G_{-}$. 
\newline \indent
As before in proposition \ref{Prop4.4}, by Slater's condition, the strong duality holds, therefore the value of the solution to \ref{MaxCut} is attained by the value of the solution to the primal problem
\begin{equation*}
\begin{array}{ll@{}ll}
    \text{maximize} & \langle C, X \rangle_{F} \\
    \text{subject to}& X_{ii} \leq 1,  &&i \in V \\
    &X \succeq 0. 
\end{array}
\end{equation*}
Since our solution $X$ is postive semidefinite, we have Gram unit vectors $v_{1}, \ldots, v_{2M}$, such that $X_{ij} = \langle v_{i}, v_{j} \rangle$. We can rewrite the objective as
\begin{align*}
    \langle C, X \rangle_{F} &= \langle C_{\geq 0}, X \rangle_{F}  + \langle C_{< 0}, X \rangle_{F} \\
    &= \sum_{(i,j) \in E, B_{ij} \geq 0} B_{ij} \|v_{i} -v_{j} \|^{2} - \sum_{(i,j) \in E, B_{ij} \leq 0} B_{ij} \|v_{i} + v_{j} \|^{2}\\
    &= 2\sum_{(i,j) \in E, B_{ij} \geq 0} B_{ij}(1- \langle v_{i},v_{j} \rangle)  - 2\sum_{(i,j) \in E, B_{ij} \leq 0} B_{ij} (1+ \langle v_{i},v_{j}\rangle )\\
    &= -2\sum_{(i,j) \in E} B_{ij}\langle v_{i},v_{j} \rangle  -4 \sum_{(i,j) \in E, B_{ij} \leq 0} B_{ij}  + 2\sum_{(i,j) \in E} B_{ij} \\
\end{align*}
where we added and subtracted $2\sum_{(i,j) \in E, B_{ij} < 0} B_{ij}(1- \langle v_{i},v_{j} \rangle)$ in the last line.  If we set $u_{1} = -v_{1}, \ldots, u_{M} = -v_{M}$ and $w_{1}= v_{M+1}, \ldots, w_{M} = v_{2M}$, we then obtain by Grothendieck's inequality, lemma  \ref{Grothineq},  that
\begin{align*}
    \langle C, X \rangle_{F}  &=  -2\sum_{(i,j) \in E} B_{ij}\langle v_{i},v_{j} \rangle  -4 \sum_{(i,j) \in E, B_{ij} \leq 0} B_{ij} + 2\sum_{(i,j) \in E} B_{ij} \\
    &= 2 \sum_{i,j} A_{ij} \langle u_{i}, w_{j} \rangle -2\sum_{i,j, B_{ij} \leq 0} B_{ij} + \sum_{i,j} B_{ij}\\
    &=   2 \sum_{i,j} A_{ij} \langle u_{i}, w_{j} \rangle + \sum_{i,j} |B_{ij}| \\
    &\leq 2K_{G}\|A \|_{\infty \to 1} + 2\|A\|_{1,1}. 
\end{align*}
\end{proof}
From here we can relate this to an upper-bound for the right hand side of proposition \ref{genfailureprob} when $C$ is obtained through the program \ref{OurSDP}. 

\begin{corollary} \label{lastcor1}
Let $A$ be a $M \times M$ real matrix and let $D$ be the diagonal matrix corresponding to the solution to the problem \ref{OurSDP}. 
Let $S_{N}$ be the sample mean defined in the adjusted algorithm for estimating $\mathrm{perm}(A)$ with samples from $\mathcal{N}(0, B+D)$.  Then for fixed $N$, we have that  
    \begin{equation}
        P(|S_{N}-\text{perm}(A)| > t) \leq \frac{3^{M}}{t^{2}N} \bigg(\frac{1}{M} (K_{G}\|A\|_{\infty \to 1}) \bigg)^{2M} .
    \end{equation}
where $K_{G}$ is Grothendieck's constant. In particular, if we take $ K/\epsilon^{2}$ samples from $\mathcal{N}(0, D+B)$ then in time $O(M/\epsilon^{2})$, this algorithm obtains an estimate of $\mathrm{perm}(A)$ within an additive error  $\epsilon (\sqrt{3}{K_{G}\|A\|_{\infty \to 1}}/{M})^{M}$ with probability at least $1-K^{-1}$.
\end{corollary}
\begin{proof}
If we let $D$ be the diagonal matrix corresponding to the solution of \ref{OurSDP}, we can obtain the following by the arithmetic-geometric mean inequality and proposition \ref{Lastprop}
\begin{align*}
     \frac{3^{M}}{t^{2}N}\prod^{2M}_{i=1}D_{ii} &\leq  \frac{3^{M}}{t^{2}N} \bigg( \frac{1}{2M} \sum^{2M}_{i=1}D_{ii} \bigg)\\
     &\leq \frac{3^{M}}{t^{2}N} \bigg(\frac{1}{M} (K_{G}\|A\|_{\infty \to 1}) \bigg)^{2M}.
\end{align*}


\end{proof}

If we further assume  that $\sum_{i,j}A_{ij} = 0$ then  we can rephrase corollary \ref{lastcor1} as the following in terms of the Max-Cut problem using corollary \ref{cutnormMaxcut}.
\begin{corollary}
Let $A$ be a $M \times M$ real matrix $\sum_{i,j}A_{ij} = 0$ with and let $D$ be the diagonal matrix corresponding to the solution to the problem \ref{OurSDP}. Then our adjusted algorithm yields the following for fixed $N$,
    \begin{equation}
        P(|S_{N}-\text{perm}(A)| > t) \leq \frac{3^{M}}{t^{2}N} \bigg(\frac{1}{M} (2K_{G}MAXCUT(G)) \bigg)^{2M} 
    \end{equation}
where $G = (V_{1}, V_{2}, E)$ is the real weighted bipartite graph with weighted adjacency matrix $B$. In particular, if we take $K/\epsilon^{2}$ samples from $\mathcal{N}(0, D+B)$ then this algorithm in time $O(M/\epsilon^{2})$ obtains an estimate of $\mathrm{perm}(A)$ within an additive error  $\epsilon (2\sqrt{3}{K_{G}MAXCUT(G)}/{M})^{M}$ with probability at least $1-K^{-1}$   
\end{corollary}

\section{Discussion}
 \indent In this work we have provided a new additive estimate of $\text{perm}(A)$, the permanent of an $M \times M$ real matrix $A$ which estimates $\text{perm}(A)$ within an additive error of $\epsilon(\sqrt{3}\|A\|)^{M}$ in time polynomial in $M$.  Like \cite{G}, the accuracy of our estimate depends on $\|A\|$ the operator norm of $A$ but introduces a factor of $3$ to the deviation from the mean. However, we improve our algorithm to estimating $\text{perm}(A)$ within an additive error of for a given Gaussian embedding $C$ of $A$ in time polynomial in $M$. We find such a $C$ by the semidefinite program \ref{OurSDP} where the estimate depends on the diagonal entries of $C$ rather than the operator norm $\|A\|$. This raises the possibility to provide classes of matrices where our procedure improves over the state of the art procedure given by theorem \ref{Gurvits}. It would be satisfying to find classes of such matrices. 
\newline \indent 
In addition, we also obtained an upper bound on the failure probability by the use of Chebyshev's inequality or more specifically, a second moment method. In particular, if we set $\eta$ to be the right hand side of proposition \ref{genfailureprob} then number of samples $N$ is polynomial in $\eta^{-1}$. We compare this to previous procedures \cite{G} and \cite{CCGP} which feature upper bounds on the failure probability where the number of samples $N$ depends log-polynomially on $\eta^{-1}$. This is through the use of a Chernoff bound or Hoeffding's inequality since the random variables considered in those works are bounded. It is likely that one could improve our estimates through the use of stronger techniques from large deviation theory or higher moment methods. 
\newline \indent Also, our work raises the question of other possible Gaussian embeddings and their potential application to this and related problems. We hope that this work will motivate further applications of Gaussian fields to estimating the permanent and other related problems.

\begin{Ack}
    We thank Noga Alon for useful discussions. In particular pointing out lemma \ref{cutdec}, Grothendieck's inequality, and helpful discussions on the real weighted Max-cut problem. We also thank Eric Vigoda and the anonymous referees for helpful suggestions on an earlier version of this work. 
\end{Ack}

\end{document}